\newcommand{\beq}{\begin{equation}}
\newcommand{\eeq}{\end{equation}}
\newcommand{\beqs}{\begin{equation*}}
\newcommand{\eeqs}{\end{equation*}}
\newcommand{\ba}{\begin{array}}
\newcommand{\ea}{\end{array}}
\newcommand{\baa}{\begin{array}{rcl}} 
\newcommand{\eaa}{\end{array}}
\newcommand{\beas}{\begin{eqnarray*}}
\newcommand{\eeas}{\end{eqnarray*}}
\newcommand{\bea}{\begin{eqnarray}}
\newcommand{\eea}{\end{eqnarray}}
\newcommand{\bal}{\begin{align}}
\newcommand{\eal}{\end{align}}
\newcommand{\bals}{\begin{align*}}
\newcommand{\eals}{\end{align*}}
\newcommand{\al}{\alpha}
\newcommand{\veps}{\varepsilon}
\newcommand{\lam}{\lambda}
\newcommand{\be}{\beta}
\newcommand{\s}{\sigma}
\newcommand{\ph}{\varphi}
\newcommand{\R}{\ensuremath{\mathbb R}}
\newcommand{\N}{\ensuremath{\mathbb N}}
\newcommand{\Z}{\ensuremath{\mathbb Z}}
\newcommand{\inprod}[1]{\langle{#1}\rangle}
\newcommand{\bds}{\begin{displaystyle}}
\newcommand{\eds}{\end{displaystyle}}
\newcommand{\DA}{\mathcal{D}(A)}
\newcommand{\solnset}{\mathcal{R}}
\def\eqdef{\stackrel{\rm def}{=}}
\newcommand{\bvec}[1]{\mathbf{#1}}
\def\vecx{\bvec x}
\def\vecu{\bvec u}
\def\vecv{\bvec v}
\def\vecw{\bvec w}
\def\veck{\bvec k}
\def\vecl{\bvec l}
\def\vece{\bvec e}
\def\vecm{\bvec m}
\def\varep{\varepsilon}
\def\ddt{\frac{d}{dt}}
\newtheorem{theorem}{Theorem}[section]
\newtheorem{lemma}[theorem]{Lemma}
\newtheorem*{statement}{Statement}
\theoremstyle{remark}
\newtheorem{remark}[theorem]{\bf{Remark}}
\numberwithin{equation}{section}
\def\mD{\mathcal D}
\title{Asymptotic expansion in Gevrey spaces for solutions of Navier-Stokes equations}
\author{Luan T. Hoang$^{1}$ and Vincent R. Martinez$^{2}$}
\address{$^1$Department of Mathematics and Statistics, Texas Tech University,
Box 41042, Lubbock, TX 79409-1042, U.S.A.}
\email{luan.hoang@ttu.edu}
\address{$^2$Mathematics Department, Tulane University, 6823 St. Charles Ave,
New Orleans, LA 70118, U.S.A.}
\email{vmartin6@tulane.edu}
\date{\today}
\begin{document}
%%%%%%%%%%%%%%%%%%%%%%%%%%%%%%%%%

\begin{abstract}
In this paper, we study the asymptotic behavior of solutions to the three-dimensional incompressible Navier-Stokes equations (NSE) with periodic boundary conditions and potential body forces. In particular, we prove that the Foias-Saut asymptotic expansion for the regular solutions of the NSE in fact holds in {\textit{all Gevrey classes}}.  This strengthens the previous result obtained in Sobolev spaces by  Foias-Saut.  By using the Gevrey-norm technique of Foias-Temam, the proof of our improved result simplifies the original argument of Foias-Saut, thereby, increasing its adaptability to other dissipative systems. Moreover, the expansion is extended to all Leray-Hopf weak solutions.
\end{abstract}

\maketitle

\pagestyle{myheadings}\markboth{L. T. Hoang and V. R. Martinez}
{Asymptotic Expansion  in Gevrey 
Spaces for Solutions of Navier-Stokes Equations}

%==================================================================%

\section{Introduction and Main result}\label{Introsec}
The Navier-Stokes equations (NSE) play an essential role in understanding fluid mechanics.
Their long-time dynamics still pose great challenges in both mathematics and physics.
This paper is focused on the asymptotic analysis of solutions to the NSE with periodic boundary conditions in the particular case where the body force is potential.
In this situation, it is elementary to show  that the solution decays exponentially when time is large.
However, to quantify the decay rate precisely is a more difficult problem.
Dyer and Edmunds \cite{DE1968} were the first to obtain an exponential lower bound for non-trivial solutions.
Later, Foias and Saut proved that in bounded or periodic domains the regular, non-trivial solutions of the NSE decay exponentially at an \textit{exact} rate which is an eigenvalue of the Stokes operator (see  \cite{FS84a}). 
Remarkably,  they go on to show that the solution in fact admits an asymptotic expansion \cite{FS87} which details its long-time behavior.
This inspired a number of subsequent studies on this expansion, as well as the associated normal form of the NSE, its normalization map, and invariant nonlinear manifolds (cf. \cite{FS91,FLOZ1,FLOZ2,FLS1} and references therein).
Applications of the expansion to statistical solutions of the NSE,  decaying turbulence, and analysis of helicity are obtained in \cite{FLN1,FLN2}. The result is also extended to  Minea's system \cite{Minea}, and to NSE in the whole space $\mathbb R^3$ \cite{KukaDecay2011}.  All of the aforementioned results are established in \textit{Sobolev spaces} leaving the question open whether these results hold in spaces of stronger regularity.  Indeed, it is well-known that solutions of the NSE regularize instantaneously to the real-analytic class (cf. \cite{FT-Gevrey}).  Thus, the analytic Gevrey class presents itself as a natural class to pose the problem of whether or not this asymptotic expansion holds in these spaces as well.  On the other hand, {the original proof in \cite{FS87}, makes use of rather sophisticated estimates which do not appear to be easily reproducible for other dissipative systems.}

In this paper, we prove that the Foias-Saut expansion indeed holds true in all Gevrey spaces (see Theorem \ref{mainthm}).  The Gevrey spaces are much stronger than the Sobolev spaces since they impose \textit{exponential decay} on the high wave-numbers of the solution.  Moreover, Gevrey norms provide extra information on the solution, particularly, on its radius of analyticity in the spatial variable which is of particular importance in the context of turbulence, see e.g. \cite{BF2014,DoeringTiti,FMRTbook, FrischBook}. 
We remark that the technique of using Gevrey norms goes back to \cite{FT-Gevrey} and is essentially an energy method analogous to that developed for Sobolev norms.  It has since become a standard method for establishing higher-order regularity for a large class of equations (cf. \cite{Biswas2, BF2014, BiswasMartinezSilva, LO, MR1844523, MR1758699, OliverTiti2000}).  
We therefore not only strengthen the asymptotic expansion of Foias-Saut, but{, at least for periodic domains,} provide a streamlined and transparent proof of its existence, rendering it adaptable to other dissipative systems.

In order to state our main result precisely, let us prepare some notations and background.

We consider a viscous, incompressible fluid in $\R^3$ with (kinematic) viscosity  $\nu>0$,  velocity vector field $\vecu(\vecx,t)$, scalar pressure $p(\vecx,t)$, and potential body force $(-\nabla\phi(\vecx,t))$, where $\phi(\vecx,t)$ is a given potential.
Here $\vecx\in \R^3$ is the location vector and $t\in\R$ is  time.
The corresponding fluid's dynamics are then described by the  Navier-Stokes equations, which is given as follows:
\begin{align}\label{nse}
\begin{split}
&\bds \frac{\partial \vecu}{\partial t}\eds  + (\vecu\cdot\nabla)\vecu -\nu\Delta \vecu = -\nabla p - \nabla\phi,\\
&\textrm{div } \vecu = 0.
\end{split}
\end{align}
For the initial value problem, it is specified that 
\beq\label{ini}
\vecu(\vecx,0) = \vecu^0(\vecx),
\eeq 
where  $\vecu^0(\vecx)$ is the given initial velocity field.

We focus on $L$-periodic solutions $(\vecu,p)$, where $\vecu$ has zero average over the domain $\Omega=(-L/2, L/2)^3$, $L>0$, corresponding to $\vecu^0$, which is also assumed to be $L$-periodic with zero average.  Indeed, we may see from \eqref{nse} that the zero-average condition is preserved by the evolution.  Here, a function $f(\vecx)$ is $L$-periodic if
\beqs%\label{Pcond} 
f(\vecx+L\vece_j)=f(\vecx)\quad \textrm{for all}\quad \vecx\in \R^3,
\quad j=1,2,3,\eeqs
where  $\{\vece_1,\vece_2,\vece_3\}$ is the standard basis of $\R^3$, and has  zero average over $\Omega$ if 
\beq \label{Zacond} 
\int_\Omega f(\vecx)d\vecx=0.
\eeq
For our purposes, we assume that the potential $\phi(\vecx,t)$ is also $L$-periodic for all $t\ge 0$.

We recall that \eqref{nse} satisfies the following scaling law: 
$$(\vecu_\lam,p_\lam)(\vecx,t)=(\lam\vecu(\lam\vecx, \lam^2t),\lam^2 p(\lam \vecx, t))$$ is a solution of \eqref{nse}, for all $\lam>0$, if $(\vecu, p)$ is a solution of \eqref{nse}.  Thus, by rescaling the spatial and time variables, we may assume throughout, without loss of generality, that  $L=2\pi$ and $\nu =1$.

Let $\inprod{\cdot,\cdot}$ and $|\cdot
|$ denote the inner product and norm in
$L^2(\Omega)^3$, that is,
$$
\inprod{u,v}=\int_\Omega \vecu(\vecx)\cdot \vecv(\vecx) d\vecx
\quad\text{and}\quad  |u|
        =\inprod{u,u}^{1/2}\quad\text{for functions } u=\vecu(\cdot),\ v=\vecv(\cdot).$$
We note that the  notation  $|\cdot|$ is also used to  denote  the Euclidean length of vectors in $\R^3$, but any apparent ambiguity will be clarified by the context.

Recall that $H^m(\Omega)$ with $m=0,1,2,\ldots$ are the Sobolev spaces of
functions on $\Omega$ that have distributional derivatives up to order $m$ belonging to $L^2(\Omega)$.

Let $\mathcal{V}$ be the set of all $L$-periodic trigonometric polynomial vector fields which are divergence-free and   has  zero average over $\Omega$.  
Define
$$H, \text{ resp. } V\ =\text{ closure of }\mathcal{V} \text{ in }
L^2(\Omega)^3, \text{ resp. } H^1(\Omega)^3.$$

We will let $\mathcal{P}$ denote the orthogonal projection in $L^2(\Omega)^3$ onto $H$.

The Stokes operator $A$ with domain
$\DA=V\cap H^2(\Omega)^3$ is defined by
\beqs A\vecu = - \mathcal{P}\Delta \vecu \quad \textrm{for all}\quad \vecu\in\DA.
\eeqs

Note that since we are working with periodic boundary conditions, we simply have $A=-\Delta$ on $\DA$.

Thanks to the zero-average condition \eqref{Zacond}, the norm $\|\vecu\|\eqdef |\nabla \vecu|$ for $\vecu\in V$  is equivalent to the standard $H^1$-norm, and the norm   $|A\vecu|$ for  $\vecu\in \DA$  is equivalent to the standard $H^2$-norm.

It is known that in the setting above, the spectrum  of the Stokes operator, $A$, is 
$$\sigma(A)=\{\lambda_j:j\in \N\},$$
where $\lambda_j$ is strictly increasing in $j$, and each is an  eigenvalue of $A$ with  $\lambda_j=|\veck|^2$ for some $\veck\in\Z^3\setminus \{0\}$.
Observe that the additive semigroup generated by $\sigma(A)$ is simply the set $\N$ of  natural numbers. 

If $n\in\s (A)$, we define  $R_n$ to be the orthogonal projection in $H$ onto the eigenspace of $A$ corresponding to $n$. In case $n\notin \s (A)$,  set  $R_n=0$.  
For $n\in \N$, define $P_n=R_1+R_2+ \cdots +R_n$.

For $\alpha,\sigma \in \R$ and  $\vecu=\sum_{\veck\ne 0} 
\hat \vecu(\veck)e^{i\veck\cdot \vecx}$, define
$$A^\alpha \vecu=\sum_{\veck\ne 0} |\veck|^{2\alpha} \hat \vecu(\veck)e^{i\veck\cdot 
\vecx},$$
$$A^\alpha e^{\sigma A^{1/2}} \vecu=\sum_{\veck\ne 0} |\veck|^{2\alpha}e^{\sigma 
|\veck|} \hat \vecu(\veck)e^{i\veck\cdot 
\vecx},$$
where $\hat{\vecu}(\veck)$ denotes the Fourier coefficient of $\vecu$ at wavenumber $\veck$. We then define the  Gevrey classes by
\beqs
G_{\alpha,\sigma}=\mD(A^\alpha e^{\sigma A^{1/2}} )\eqdef \{ \vecu\in H: |\vecu|_{\alpha,\sigma}\eqdef |A^\alpha 
e^{\sigma A^{1/2}} \vecu|<\infty\}.
\eeqs
Then the domain of $A^\al$ is $\mD(A^\alpha)=G_{\alpha,0}$.
Note that $\mD(A^0)=H$,  $\mD(A^{1/2})=V$, and  $\|\vecu\|=|A^{1/2}\vecu|$ for $\vecu\in V$.

We  define the bilinear mapping associated with
the nonlinear term in the Navier-Stokes equations  by
\beqs%\label{defBuv}
B(\vecu,\vecv)=\mathcal{P}(\vecu\cdot \nabla \vecv) \quad \textrm{for all}\quad \vecu,\vecv\in\DA.
\eeqs

For convenience, we will denote $u(t)=\vecu(\cdot,t)$ from now on.  Thus, by applying the Leray projection, $\mathcal{P}$, to  \eqref{nse} and \eqref{ini}, the initial value problem for NSE may be re-written in the functional form as
\beq\label{fctnse}
\frac{du(t)}{dt} + Au(t) +B(u(t),u(t))=0\quad \forall t> 0,
\eeq
with the initial data 
\beq\label{uzero} 
u(0)=u^0\in H.
\eeq
(See e.g. \cite{CFbook} or  \cite{TemamAMSbook} for more details.)

We recall the following local existence theorem \cite{CFbook, LadyFlowbook69,TemamAMSbook}: For any $u^0\in V$ there exists $T\in(0,\infty]$ and a (unique) solution $u(t)$ of \eqref{fctnse} on $(0,T)$ such that $u(t)$ is continuous from $[0,T)$ to $V$ and satisfies  \eqref{uzero}. We call such $u(t)$ a regular solution on $[0,T)$.
Moreover, if  $T_{\rm max}$ is the maximal time of existence for regular solution $u(t)$ and $T_{\rm max}<\infty$, then
\beqs
\lim_{t\to T_{\rm max}^-}\|u(t)\|=\infty.
\eeqs

We denote by $\solnset$ the set of all $u^0\in V$ such that the regular solution $u(t)$ of \eqref{fctnse} and \eqref{uzero} exists on $[0,\infty)$.

%%%======================================%%%

For any $u^0\in \solnset$, it is proved in   \cite{FS87} that 
the regular solution $u(t)$ of \eqref{fctnse} and \eqref{uzero} has an asymptotic expansion
\beq \label{expand}
u(t) \sim \sum_{n=1}^\infty q_n(t)e^{-nt},
\eeq
where $q_j(t)$'s are  {unique} polynomials in $t$ 
with values in $\mathcal V$.  This means that for any
$N\in \N$ the remainder  $v_N(t)=u(t)-\sum_{j=1}^N q_j(t)e^{-jt}$
satisfies
\beq\label{cterma}
\|v_N(t)\|_{H^m(\Omega)^3}=\mathcal O\big(e^{-(N+\varepsilon_N)t}\big)\textrm{ as }t\to\infty
\textrm{ for some }\varepsilon_N>0\text{ and all } m=0,1,2,3\ldots,
\eeq

The notation, $\mathcal O(f(t))$, above is defined as 
	\beqs
		\Phi(t)=\mathcal{O}(f(t))\ \text{as $t\to\infty$}\quad \text{if and only if }\exists T,C>0,\ \Phi(t)\leq Cf(t),\ {\forall t>T},
	\eeqs
	where $\Phi$ and $f$ are non-negative scalar quantities.  
We will also make use of the following notation: for $v(t)$ which belongs to $G_{\al,\s}$ eventually, 
	\beqs
		v(t)=\mathcal{O}_{\al,\s}(f(t))\ \text{as $t\to\infty$}\quad \text{if and only if }
		\exists T,C>0,\ |v(t)|_{\al,\s}\leq Cf(t),\ {\forall t>T}.
	\eeqs
We note that the times, $T$, and absolute constants, $C$, may depend on the parameters appearing in $f$.  The dependence on such parameters in the estimates performed below will be indicated as needed.

Our main result is the following improvement.

%===================================================================%
\begin{theorem}[Main theorem]
\label{mainthm}
The expansion \eqref{expand} holds on any Gevrey space $G_{\alpha,\sigma}$ with 
$\alpha,\sigma >0$. More precisely, for any Leray-Hopf weak solution $u(t)$ of \eqref{fctnse}, there are polynomials $q_n(t)$'s  in $t$ valued in $\mathcal V$ for all $n\in\N$ such that if $\alpha,\sigma >0$ and $N\ge 1$ then
% \beq%\label{remain}
% \hilite{OLD:\Big|u(t)-\sum_{n=1}^N q_n(t)e^{-nt}\Big|_{\alpha,\sigma}= \mathcal O 
% \big(e^{-(N+1/2)t}\big)\quad\textrm{as} \quad t\to\infty.}
% \eeq
\beq\label{remain}
\Big|u(t)-\sum_{n=1}^N q_n(t)e^{-nt}\Big|_{\alpha,\sigma}= \mathcal O 
\big(e^{-(N+\varep)t}\big)\quad\textrm{as } t\to\infty,\text{ for any }\varep\in(0,1).
\eeq
\end{theorem}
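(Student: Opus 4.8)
The plan is to construct the polynomials $q_n(t)$ inductively and to control the remainders $v_N(t) = u(t) - \sum_{n=1}^N q_n(t)e^{-nt}$ in Gevrey norm by an energy-type (Gevrey-norm) argument à la Foias–Temam. The starting point is the elementary fact that since the forcing is potential, $u(t)$ decays exponentially; in fact one first shows (using the local theory and the energy inequality) that any Leray--Hopf weak solution becomes regular after some finite time $T_0$, and that $\|u(t)\| \to 0$, so in particular $u(t) \in \solnset$ eventually. This reduces the theorem to the case of a regular solution on $[T_0,\infty)$ with $\|u(t)\|$ small; after that, a standard parabolic smoothing/Gevrey-regularization estimate (the Foias--Temam estimate applied on unit time intervals) gives that $|u(t)|_{\al,\s}$ is bounded for $t \geq T_0 + 1$, say with $\s$ fixed equal to $1$ (monotonicity in $\s$ then yields all smaller values, and rescaling time handles larger ones). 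So without loss of generality we prove \eqref{remain} for a single convenient pair $(\al,\s) = (\al_0, 1)$ with, say, $\al_0$ large enough to dominate all derivative counts appearing, and the general statement follows by interpolation/embedding among Gevrey classes.

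Next comes the inductive construction. Writing $u = \sum_{n=1}^{N-1} q_n e^{-nt} + v_{N-1}$ and feeding this into \eqref{fctnse}, the bilinear term $B(u,u)$ expands into a finite sum of terms $B(q_j, q_k)e^{-(j+k)t}$ plus cross terms involving $v_{N-1}$. Collecting the coefficient of $e^{-Nt}$ one is led to define $q_N$ as the solution of a linear ODE in the finite-dimensional space $P_N H$ (a Banach-space-valued ODE with polynomial-in-$t$ inhomogeneity built from the $q_j$, $j<N$), namely an equation of the schematic form $q_N' + (A - N)q_N = -\sum_{j+k=N} P_N B(q_j,q_k) - (\text{lower-order known terms})$, whose solution is automatically a polynomial in $t$ valued in $\mathcal V$; the part of $B(u,u)$ landing in $(I - P_N)H$ or carrying faster decay is absorbed into the new remainder $v_N$. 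The key structural input here is that the additive semigroup generated by $\sigma(A)$ is exactly $\N$, so that the exponents $e^{-nt}$ generated by the quadratic nonlinearity stay on the integer lattice and the scheme closes; this is exactly where periodicity is used. One must also verify uniqueness of the $q_n$, which follows because an expansion of the zero function in distinct exponentials with polynomial coefficients forces all coefficients to vanish.

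The analytic heart is the remainder estimate: one shows $v_N$ satisfies an equation of the form $v_N' + A v_N = -B(v_N, v_N) - B(v_N, w_N) - B(w_N, v_N) + g_N(t)$, where $w_N = \sum_{n\le N} q_n e^{-nt}$ and $g_N(t) = \mathcal{O}_{\al_0,1}(t^{M} e^{-(N+1)t})$ collects the terms that did not cancel, and then proves by induction on $N$ that $|v_N(t)|_{\al_0,1} = \mathcal O(e^{-(N+\veps)t})$ for every $\veps \in (0,1)$. Assuming inductively $|v_{N-1}(t)|_{\al_0,1} = \mathcal O(e^{-(N-1+\veps')t})$, one takes the Gevrey inner product of the $v_N$-equation with $A^{2\al_0} e^{2 A^{1/2}} v_N$ and uses the Foias--Temam-type commutator bound on $\langle B(v,v), A^{2\al}e^{2A^{1/2}} v\rangle$ together with the smallness of $\|u(t)\|$ (hence of $w_N$ and of the principal part of the linearized operator's perturbation) to obtain a differential inequality $\frac{d}{dt}|v_N|_{\al_0,1}^2 + (2N - o(1))|v_N|_{\al_0,1}^2 \leq C(\veps)e^{-2(N+\veps)t}$, which on integration yields exactly the claimed rate. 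I expect the main obstacle to be precisely this Gevrey-norm estimate for the nonlinear term in the presence of the spectral shift: one needs the analyticity-radius machinery of \cite{FT-Gevrey} to bound $|\langle B(u,v), A^{2\al}e^{2\s A^{1/2}}w\rangle|$ in terms of Gevrey norms of $u,v,w$, and one must be careful that the dissipative gain $2N$ from $A v_N$ genuinely beats the growth $2(N+\veps)$ coming from the inhomogeneity only because $\veps<1<$ (gap to the next relevant exponent); tracking the polynomial-in-$t$ factors so that they are swallowed by the strict inequality $\veps<1$ is the delicate bookkeeping, but conceptually it is the same mechanism as in the Sobolev case, just carried out with the exponential weight $e^{\s A^{1/2}}$ riding along through every estimate.
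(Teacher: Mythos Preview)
Your overall scheme---eventual regularity, inductive construction of the $q_n$, remainder equation for $v_N$---matches the paper, but the heart of your argument has a real gap. You claim an energy inequality of the form
\[
\frac{d}{dt}|v_N|_{\al_0,1}^2 + (2N - o(1))|v_N|_{\al_0,1}^2 \le C(\veps)e^{-2(N+\veps)t},
\]
asserting a ``dissipative gain $2N$ from $Av_N$.'' This is not justified: the remainder $v_N = u - \sum_{n\le N} q_n e^{-nt}$ has nontrivial components in \emph{every} eigenspace $R_k H$, including $k=1,\dots,N$ (indeed, in the paper's construction each $q_{n}$ is built from pieces $p_{n,k}$ living in $R_kH$ for all $k$ up to some $s_n$, and these do not exactly cancel the low modes of $u$). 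Hence $\langle Av_N,v_N\rangle_{\al_0,1}$ is only $\ge |v_N|_{\al_0,1}^2$ by Poincar\'e, not $\ge N|v_N|_{\al_0,1}^2$, and a direct Gevrey energy estimate cannot produce decay faster than $e^{-t}$. Equivalently, if you pass to $e^{Nt}v_N$, the shifted operator $A-N$ is not positive. The ``smallness of the linearized perturbation'' only controls the bilinear terms; it does nothing about this spectral obstruction.

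The paper resolves this by abandoning the global energy estimate in the induction step and instead decomposing $v_N = \sum_k R_k v_N$ mode by mode. Each $R_k v_N$ satisfies a scalar linear ODE with integrating factor $e^{kt}$ and forcing $R_k h_N = \mathcal O_{\al,\s}(e^{-(N+1+\varep_*)t})$. For $k\le N$ one integrates from $t$ to $\infty$ and the fast decay of the forcing (not dissipation) yields $R_k v_N = e^{-(N+1)t}p_{N+1,k}(t) + \mathcal O(e^{-(N+1+\varep_*)t})$; for the resonant mode $k=N+1$ a genuine polynomial $p_{N+1,N+1}$ must be extracted via the limit $\xi_{N+1}=\lim_{t\to\infty}e^{(N+1)t}R_{N+1}v_N(t)$; for $k\ge N+2$ one integrates forward and sums in $k$. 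Your sketch would be repaired by inserting exactly this low-mode/resonant/high-mode splitting before any energy argument. (A smaller issue: ``rescaling time handles larger $\s$'' is not correct once $\nu$ and $L$ are fixed; the paper instead proves the decay in $G_{1/2,\s+1}$ for \emph{each} $\s$ with a $\s$-dependent waiting time, and then checks separately that the resulting $q_n^\s$ are independent of $\s$ by uniqueness of the expansion.)
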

%===================================================================%

Regarding the Leray-Hopf weak solutions, see e.g. \cite{FMRTbook}.  Some remarks are in order for Theorem \ref{mainthm}:

(a) It suffices to state \eqref{remain} with all $\s>0$ and a fixed $\alpha$, say, $\alpha=0$.
However, we keep the stated form \eqref{remain} for the generality of the Gevrey norms.

(b) In case $u(0)=u^0\in \solnset$, the polynomials  $q_n$'s are uniquely determined by $u^0$. In general, they depend on the solution $u(t)$. Indeed, for a \textit{fixed} weak solution, $u(t)$, the corresponding polynomials, $q_n$'s, are then uniquely determined due to the asymptotic properties of the expansion.

(c) The extension of the Foias-Saut result from regular solutions to Leray-Hopf weak solutions can be useful in the study of turbulence. (See a similar extension for the normalization map of Leray-Hopf weak solutions in \cite{FLN2}.)

\medskip 

Theorem \ref{mainthm} will be proved in Section \ref{expandsec}. Although the proof follows the original scheme in \cite{FS87}, by working directly in Gevrey classes, the need for complicated, recursive estimates in Sobolev spaces for the solution's time derivatives and its higher orders is \textit{eliminated completely}, thereby simplifying the proof considerably in addition to improving  {significantly} the regularity of the expansion.  In particular, since one no longer needs to appeal to particular higher regularity results for the Stokes operator, this approach indicates an avenue for establishing such an expansion to other dissipative systems.  
Let us also point out that the setting of periodic boundary conditions that we consider here is an example of the more difficult case dealt with in \cite{FS87} when there are resonances in the eigenvalues of the Stokes operator.  Our choice of this setting is for the availability of explicit eigenfunctions which are convenient to work with when estimating the Gevrey norms of the bilinear operator $B(u,v)$, see Lemma \ref{nonLem}.

The main observation in proving Theorem \ref{mainthm} is that for each $\s>0$ and $N\ge 1$, the remainder estimate \eqref{remain} is conjectured and then proved, by induction in $N$, to hold true for \emph{all} $\al>0$. This is crucial due to the estimate of the nonlinear mapping $B(u,v)$ which always requires the regularity of one more derivative for $u$ or $v$, see Lemma \ref{nonLem}. However, since the Gevrey norm in $|u|_{0,\s}$ for any $\s>0$ is stronger than \emph{all} Sobolev norms $|A^\al u|$, this obstacle becomes a non-issue.
What remains then in dealing with the weak solutions is that the time of eventual regularity  in the class $G_{\al,\s}$ must be uniform in $\al$, albeit its dependence on $\s$.
By appealing to the Leray energy inequality to enter a small-data regime, we establish this together with asymptotic bounds in these Gevrey norms (Lemma \ref{gev:decay} and Theorem \ref{corsmall}). Note that these bounds are obtained with an exact exponential decay rate, hence, allowing us to deduce the exponential decay rates for the remainders in a straightforward manner.

%%%%%%%%%%%%%%%%%%%%%%%%%%%%%%%%%
\section{Basic estimates}\label{Auxsec}
%%%%%%%%%%%%%%%%%%%%%%%%%%%%%%%%%

In this section, we derive  estimates for the Gevrey norms of the solutions, particularly, when time is large.
First, we state some basic inequalities.
For all $\al,\s\ge 0$,
\begin{align}\label{PI}
|u|&\le |A^\al u|\quad \text{(Poincar\'e's inequality)},\\
\label{GI}
|u|&\le e^{-\s} |e^{\s A^{1/2}} u|.
\end{align}

When $\s,\al>0$, one has
\beqs %\label{mxe}
\max_{x\ge 0} (x^{2\al}e^{-x})=\Big(\frac{2\al}{e}\Big)^{2\al},
\eeqs 
hence
\beq\label{als}
|A^\al e^{-A^{1/2}}u| \le \Big(\frac{2\al}{e}\Big)^{2\al} |u|.
\eeq

Regarding the bilinear mapping $B(u,v)$, we will use the following inequalities which are proved  in Appendix \ref{apex}.

%===================================================================%
\begin{lemma}%[cf. \cite{FLS1}, Lemma 2.3]
\label{nonLem}
For $\s,\alpha\ge 0$ one has
\beq\label{B1} 
|B(u,v)|_{\al,\s}\le 4^\alpha c_*\Big(|u|_{1/2,\s}^{1/2}\, |u|_{1,\s}^{1/2}\, |v|_{\al+1/2,\s}
+|u|_{\al+1/4,\s} |v|_{1,\s}\Big), 
\eeq
\beq\label{B2} 
|B(u,v)|_{\al,\s}\le 4^\alpha c_*\Big(|u|_{1/2,\s}^{1/2}\, |u|_{1,\s}^{1/2}\, |v|_{\al+1/2,\s}
+|u|_{\al+1/2,\s}\, |v|_{3/4,\s}\Big), 
\eeq
where $c_*>0$ is independent of $\alpha,\sigma$. In particular, if $\alpha\ge 1/2$ then 
\beq\label{AalphaB} 
|B(u,v)|_{\al,\s}\le K^\alpha  |u|_{\al+1/2,\s} \, |v|_{\al+1/2,\s} \quad \forall u,v\in G_{\alpha+1/2,\s},  \eeq
where $K=4(\max\{2c_*,1\})^2$.
\end{lemma}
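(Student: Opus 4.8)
The plan is to work in Fourier series on the torus $\Omega=(-\pi,\pi)^3$ and to follow the Foias--Temam principle that the exponential Gevrey weight is \emph{subadditive} along the convolution that defines $B$, so it can be pushed through the nonlinearity before any analysis is done. Writing $u=\sum_{j\ne0}\hat u(j)e^{ij\cdot x}$ and $v=\sum_{l\ne0}\hat v(l)e^{il\cdot x}$, a direct computation gives, for $k=j+l$,
\beqs
\widehat{B(u,v)}(k)=i\Big(I-\tfrac{k\otimes k}{|k|^2}\Big)\!\!\sum_{j+l=k}\big(\hat u(j)\cdot l\big)\,\hat v(l),
\eeqs
so, since the Leray symbol is an orthogonal projection on $\mathbb C^3$ and $|\hat u(j)\cdot l|\le|\hat u(j)|\,|l|$, one has the scalar bound $|\widehat{B(u,v)}(k)|\le\sum_{j+l=k}|\hat u(j)|\,|l|\,|\hat v(l)|$. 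Now multiply by the weight $|k|^{2\al}e^{\s|k|}$ and use the two elementary inequalities $e^{\s|k|}\le e^{\s|j|}e^{\s|l|}$ (from $|k|\le|j|+|l|$) and $|k|^{2\al}\le(|j|+|l|)^{2\al}\le2^{2\al}(|j|^{2\al}+|l|^{2\al})$ — the latter being exactly where the factor $4^\al$ comes from. The right-hand side is then dominated by $4^\al$ times a sum of two nonnegative discrete convolutions, each of which, by the convolution theorem, is the sequence of Fourier coefficients of a product of two scalar functions with nonnegative coefficients: setting $\widehat\Phi(j)=e^{\s|j|}|\hat u(j)|$, $\widehat{\Phi'}(j)=|j|^{2\al}e^{\s|j|}|\hat u(j)|$, $\widehat\Psi(l)=|l|e^{\s|l|}|\hat v(l)|$ and $\widehat{\Psi'}(l)=|l|^{2\al+1}e^{\s|l|}|\hat v(l)|$, and using that $u,v$ have zero average (so no zero mode), one has $|A^\beta\Phi|=|u|_{\beta,\s}$, $|A^\beta\Phi'|=|u|_{\al+\beta,\s}$, $|A^\beta\Psi|=|v|_{1/2+\beta,\s}$ for $\beta\ge0$, and $|\Psi'|=|v|_{\al+1/2,\s}$. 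By Parseval's identity and the triangle inequality in $\ell^2$ the whole task reduces to
\beqs
|B(u,v)|_{\al,\s}\le4^\al\big(\|\Phi\,\Psi'\|_{L^2}+\|\Phi'\,\Psi\|_{L^2}\big).
\eeqs

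These two products of functions are then estimated by H\"older's inequality combined with the standard three-dimensional Sobolev embeddings $H^{1/2}(\Omega)\hookrightarrow L^3(\Omega)$ and $H^1(\Omega)\hookrightarrow L^6(\Omega)$, together with Agmon's inequality $\|f\|_{L^\infty}\le c\,|A^{1/2}f|^{1/2}|Af|^{1/2}$ (valid for zero-average $f$ on $\Omega$). For the first product, which produces the term common to \eqref{B1} and \eqref{B2}, use $\|\Phi\,\Psi'\|_{L^2}\le\|\Phi\|_{L^\infty}\|\Psi'\|_{L^2}\le c\,|u|_{1/2,\s}^{1/2}|u|_{1,\s}^{1/2}|v|_{\al+1/2,\s}$. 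For the second product, split H\"older's inequality in two ways: $\|\Phi'\,\Psi\|_{L^2}\le\|\Phi'\|_{L^3}\|\Psi\|_{L^6}\le c\,|A^{1/4}\Phi'|\,|A^{1/2}\Psi|=c\,|u|_{\al+1/4,\s}|v|_{1,\s}$ gives \eqref{B1}, while $\|\Phi'\,\Psi\|_{L^2}\le\|\Phi'\|_{L^6}\|\Psi\|_{L^3}\le c\,|A^{1/2}\Phi'|\,|A^{1/4}\Psi|=c\,|u|_{\al+1/2,\s}|v|_{3/4,\s}$ gives \eqref{B2}. The point requiring care here is that every Sobolev and Agmon constant is independent of $\al$ and of $\s$: of $\s$ because the exponential weight has already been disposed of by subadditivity, and of $\al$ because all $\al$-growth has been collected into the explicit prefactor $4^\al$; one then takes $c_*$ to be the largest of these finitely many constants.

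Finally, \eqref{AalphaB} follows from \eqref{B1} (or \eqref{B2}) together with the monotonicity $|w|_{\beta,\s}\le|w|_{\beta',\s}$ for $\beta\le\beta'$ (a consequence of $|k|\ge1$): when $\al\ge1/2$, each of the indices $1/2$, $1$ and $\al+1/4$ occurring on the right of \eqref{B1} is $\le\al+1/2$, so that right-hand side is bounded by $2c_*4^\al\,|u|_{\al+1/2,\s}|v|_{\al+1/2,\s}$; since $K=4(\max\{2c_*,1\})^2$ satisfies $K^\al\ge2c_*4^\al$ for all $\al\ge1/2$, the claim follows. The main obstacle — such as it is — is precisely this uniform bookkeeping of constants: one must extract the entire $\al$-dependence through $|k|^{2\al}\le2^{2\al}(|j|^{2\al}+|l|^{2\al})$ at the very outset and route the $\s$-dependence entirely through $e^{\s|k|}\le e^{\s|j|}e^{\s|l|}$, so that the genuinely analytic inputs (H\"older's inequality, the fixed Sobolev embeddings, Agmon's inequality) are applied only to ordinary $L^2$-based quantities with universal constants. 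The rest is routine.
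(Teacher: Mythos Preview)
Your proof is correct and follows essentially the same approach as the paper's: both push the Gevrey weight through the convolution via $e^{\sigma|k|}\le e^{\sigma|j|}e^{\sigma|l|}$ and $|k|^{2\alpha}\le 2^{2\alpha}(|j|^{2\alpha}+|l|^{2\alpha})$, reduce to products of nonnegative-coefficient scalar functions, and then apply Agmon's inequality for one term and H\"older with the $H^{1/2}\hookrightarrow L^3$, $H^1\hookrightarrow L^6$ embeddings (in the two orderings) for the other. The only cosmetic difference is that the paper estimates by duality against a test function $w$ and its companion $w_*$, whereas you bound the weighted $\ell^2$-norm of $\widehat{B(u,v)}$ directly via Parseval; your functions $\Phi,\Phi',\Psi,\Psi'$ are exactly the paper's $e^{\sigma A^{1/2}}u_*$, $(-\Delta)^\alpha e^{\sigma A^{1/2}}u_*$, $(-\Delta)^{1/2}e^{\sigma A^{1/2}}v_*$, $(-\Delta)^{\alpha+1/2}e^{\sigma A^{1/2}}v_*$.
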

%===================================================================%

Here afterward, we denote $C_\al=K^\al$ with $K\ge 1$  is the  constant in \eqref{AalphaB}.

We start by establishing uniform-in-time estimates when initial data is small.

\begin{lemma}\label{gev:decay}
Let $\al\ge 1/2$ and $\delta\in(0,1)$. If $u^0\in D(A^\al)$ satisfies
	\begin{align}\label{small1}
		|A^\al u^0|<\frac\delta{2C_\al},
	\end{align}
then the regular solution $u(t)$ of \eqref{fctnse} and \eqref{uzero} exists on $[0,\infty)$ and satisfies  $u(t)\in D(A^\beta)$ for all $\beta>0$ and $t>0$.
Moreover, for any $\s>0$
\beq\label{cl}
|u(t)|_{\al,\s}\le e^{-(1-\delta)t}|A^\alpha u^0| \quad \forall\  t\ge 4\s/\delta.
\eeq
\end{lemma}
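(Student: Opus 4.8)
The plan is to establish \eqref{cl} by a Gevrey-norm energy estimate performed directly on the (already known to be global, by the small-data theory in $D(A^\al)$ and the bootstrapping to $D(A^\beta)$ for all $\beta>0$) regular solution $u(t)$. First I would recall that, thanks to instantaneous regularization, for each $t>0$ the solution lies in every Gevrey class, so the quantities $|u(t)|_{\al,\s(t)}$ below are finite whenever the exponent $\s(t)$ is nonnegative; the standard way to exploit this (going back to Foias--Temam) is to work with a \emph{time-dependent} Gevrey weight. Concretely, set $\varphi(t)=A^\al e^{\s(t)A^{1/2}}u(t)$ for a suitable increasing function $\s(t)$ with $\s(0)=0$, and compute
\beqs
\frac12\ddt |\varphi(t)|^2 = \s'(t)\,\langle A^{1/2}\varphi,\varphi\rangle - |A^{1/2}\varphi|^2 - \langle A^\al e^{\s A^{1/2}} B(u,u),\varphi\rangle.
\eeqs
The linear term is controlled by absorbing $\s'(t)\langle A^{1/2}\varphi,\varphi\rangle \le \s'(t)|A^{1/2}\varphi||\varphi| \le \tfrac14|A^{1/2}\varphi|^2 + (\s'(t))^2|\varphi|^2$, and by Poincar\'e's inequality \eqref{PI} we have $|A^{1/2}\varphi|\ge|\varphi|$ (since $\al+1/2\ge 1/2$, more precisely $|\varphi|=|A^\al e^{\s A^{1/2}}u|\le |A^{1/2+\al}e^{\s A^{1/2}}u|=|A^{1/2}\varphi|$). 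This is the mechanism that will produce the factor $e^{-(1-\delta)t}$.

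Next I would handle the nonlinear term with Lemma \ref{nonLem}: since $\al\ge 1/2$, estimate \eqref{AalphaB} gives
\beqs
|\langle A^\al e^{\s A^{1/2}}B(u,u),\varphi\rangle| \le |B(u,u)|_{\al,\s}\,|\varphi| \le C_\al\,|u|_{\al+1/2,\s}^2\,|\varphi| = C_\al\,|A^{1/2}\varphi|^2\,|\varphi|.
\eeqs
Feeding everything back in, and writing $y(t)=|\varphi(t)|$, one obtains a differential inequality of the form
\beqs
\ddt y^2 \le -\tfrac32 |A^{1/2}\varphi|^2 + 2(\s'(t))^2 y^2 + 2C_\al |A^{1/2}\varphi|^2 y.
\eeqs
The point is a bootstrap/continuity argument: assuming inductively that $y(t)\le \delta/(2C_\al)$ on a maximal interval, the nonlinear term $2C_\al|A^{1/2}\varphi|^2 y$ is dominated by $\delta|A^{1/2}\varphi|^2$, so the dissipation still beats it (leaving, say, $-(1-\delta)|A^{1/2}\varphi|^2$ after also discarding the extra $|A^{1/2}\varphi|^2/2$), and with the choice $\s(t)=\min\{\delta t/4,\s\}$ — so that $\s'(t)\le \delta/4$ while $t<4\s/\delta$ and $\s'(t)=0$ afterward, and recalling $|A^{1/2}\varphi|\ge y$ — the term $2(\s'(t))^2 y^2\le (\delta^2/8)|A^{1/2}\varphi|^2$ is likewise absorbed. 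One is left with $\ddt y^2 \le -2(1-\delta)|A^{1/2}\varphi|^2 \le -2(1-\delta)y^2$, hence $y(t)\le e^{-(1-\delta)t}y(0)=e^{-(1-\delta)t}|A^\al u^0|$ for as long as the bootstrap hypothesis holds; since the right side is decreasing and starts below $\delta/(2C_\al)$ by \eqref{small1}, the hypothesis never breaks and the bound is global. Finally, for $t\ge 4\s/\delta$ we have $\s(t)=\s$, so $|\varphi(t)|=|u(t)|_{\al,\s}$, which is exactly \eqref{cl}.

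The main obstacle, and the step requiring genuine care rather than routine computation, is the continuity/bootstrap argument that keeps $|u(t)|_{\al,\s(t)}$ below the threshold $\delta/(2C_\al)$ for all time: one must argue that the solution is continuous into $D(A^\al)$ (hence into the time-dependent Gevrey class, using that $\s(t)$ is Lipschitz with $\s(0)=0$ and that $u$ regularizes for $t>0$), set up the maximal interval on which the a priori bound holds, and then use the strict differential inequality to show the bound cannot be saturated — a standard but slightly delicate closing argument. A secondary technical point is justifying the differentiation of $|\varphi(t)|^2$ and the use of $B$-estimates in the Gevrey class for $t>0$, which follows from the $D(A^\beta)$-regularity for all $\beta>0$ asserted in the statement; I would invoke that regularity (itself a consequence of the standard Gevrey smoothing, cf. \cite{FT-Gevrey}) to legitimize all formal manipulations and then run the estimate above.
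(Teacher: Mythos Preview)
Your approach is essentially the paper's: a time-dependent Gevrey weight $\s(t)$ starting at $0$ and saturating at $\s$, the bilinear estimate \eqref{AalphaB}, and a continuity/bootstrap argument to keep $|u(t)|_{\al,\s(t)}$ below $\delta/(2C_\al)$. There is, however, a quantitative slip in your handling of the term $\s'(t)\langle A^{1/2}\varphi,\varphi\rangle$. Splitting it by Young's inequality as $\tfrac14|A^{1/2}\varphi|^2+(\s')^2|\varphi|^2$ throws away a quarter of the dissipation at the outset, and your subsequent bookkeeping does not actually produce the claimed $\ddt y^2\le -2(1-\delta)|A^{1/2}\varphi|^2$: from your own inequalities one only obtains $\ddt y^2\le -(3/2-\delta-\delta^2/8)|A^{1/2}\varphi|^2$, which for small $\delta$ gives decay of order $e^{-(3/4)t}$ rather than $e^{-(1-\delta)t}$, so \eqref{cl} does not follow as written.

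The fix is exactly what the paper does: skip the Young splitting and apply Poincar\'e directly, $\s'(t)\langle A^{1/2}\varphi,\varphi\rangle\le \s'(t)|A^{1/2}\varphi|\,|\varphi|\le \s'(t)|A^{1/2}\varphi|^2$. With $\s'(t)\le \delta/2$ (or even your choice $\s'\le \delta/4$) this term together with the nonlinear one contributes at most $\delta|A^{1/2}\varphi|^2$, leaving the clean inequality $\tfrac12\ddt y^2+(1-\delta)|A^{1/2}\varphi|^2\le 0$, whence $\ddt y^2\le -2(1-\delta)y^2$ and Gronwall gives $y(t)\le e^{-(1-\delta)t}|A^\al u^0|$. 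With this correction your argument and the paper's coincide.
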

\begin{proof}
The following calculations are formal but can be made rigorous by using solutions of the Galerkin approximations of \eqref{fctnse}, and the standard passage to the limit, see e.g. \cite{TemamAMSbook}.

Let $\veps>0$,  and $\varphi\in C^\infty(\R)$ such that $\varphi(t)=0$ for $t\le 0$, $\varphi(t)>0$ for $t>0$, 
$\varphi(t)=\sigma$ for $t\ge 2\sigma/\veps$, and $0<\varphi'(t)\le \veps$ for $t\in(0,2\sigma/\veps)$. Then 
\beq\notag
\frac{d}{dt} (A^\alpha e^{\varphi A^{1/2}}u)+ A^{\alpha+1} e^{\varphi A^{1/2}}u 
=-A^\alpha e^{\varphi A^{1/2}}B(u,u)+\varphi'(t)A^{\alpha+1/2} e^{\varphi 
A^{1/2}}u.
\eeq
Taking the inner product of the equation with $A^\alpha e^{\varphi A^{1/2}}u$, we have
\begin{multline*}
\frac12\frac{d}{dt} |A^\alpha e^{\varphi A^{1/2}}u|^2+ |A^{\alpha+1/2} 
e^{\varphi A^{1/2}}u|^2\\
 =- \langle A^\alpha e^{\varphi A^{1/2}}B(u,u),A^\alpha e^{\varphi 
A^{1/2}}u\rangle+\varphi'(t)\langle A^{\alpha+1/2} e^{\varphi A^{1/2}}u,A^\alpha e^{\varphi 
A^{1/2}}u\rangle.
\end{multline*}
Applying Cauchy-Schwarz inequality and \eqref{AalphaB} to the terms on then right-hand side yields
\begin{align}\label{raw0}
\frac12\frac{d}{dt} |u|_{\al,\varphi}^2+ |A^{1/2}u|_{\al,\varphi}^2
 \le C_\alpha | A^{1/2}u|_{\al,\varphi}^2 |u|_{\al,\varphi}+\varphi'(t)|A^{1/2}u|_{\al,\varphi} |u|_{\al,\varphi}.\end{align}
Note that $C_\al\geq4$.  Then
\begin{align}\label{raw1}
\frac12\frac{d}{dt} |u|_{\al,\varphi}^2+ |A^{1/2}u|_{\al,\varphi}^2
\le (C_\al |u|_{\al,\varphi}+\varep)|A^{1/2}u|_{\al,\varphi}^2.
\end{align}
Letting $\veps=\delta/2$, we obtain
\beq\label{raw}
\frac12\frac{d}{dt} |u|_{\al,\varphi}^2+ (1-\frac\delta2-C_\alpha|u|_{\al,\varphi}) |A^{1/2}u|_{\al,\varphi}^2\le 0.
\eeq

We claim that 
\beq\label{claim} 
C_\al|u(t)|_{\al,\ph(t)} \le \delta/2 \quad\forall t\ge 0.
\eeq

Suppose \eqref{claim} is not true, then by \eqref{small1}, there is $T\in(0,\infty)$ such that
\begin{align}\label{smallall} 
C_\al|u(t)|_{\al,\ph(t)} &< \delta/2 \quad\forall t\in[0,T), \\
\label{uT} 
C_\al|u(T)|_{\al,\ph(T)} &= \delta/2. 
\end{align}

By \eqref{raw} and \eqref{smallall}, we have for $t\in(0,T)$ that
\beq\label{dsmall}
\frac12\frac{d}{dt} |u|_{\al,\varphi}^2+ (1-\delta)| A^{1/2}u|_{\al,\varphi}^2\le 0\quad \forall t\in(0,T).
\eeq
Hence
\beqs
|u(t)|_{\al,\varphi(t)}\le |u^0|_{\al,\varphi(0)}=|A^\al u^0|\quad\forall t\in(0,T). 
\eeqs

Passing $t\nearrow T$ gives
\beqs
|u(T)|_{\al,\varphi(T)}\le |A^\al u^0|<\frac\delta{2 C_\al}, 
\eeqs
which contradicts \eqref{uT}. Therefore, \eqref{claim} holds true. 
Consequently, $u(t)$ is a regular solution on $[0,\infty)$.

For $t>0$, we have $\varphi(t)>0$, then for any $\beta>0$, applying inequality \eqref{als} with $\al=\beta$ and $\s=\varphi(t)$, we have $u(t)\in \mD(A^\beta)$.

As a consequence of \eqref{claim}, differential inequality \eqref{dsmall} now holds for all $t>0$. By Poincar\'e's  inequality,
\beqs%\label{dd}
\frac12\frac{d}{dt} |u|_{\al,\varphi}^2+ (1-\delta)| u|_{\al,\varphi}^2\le 0\quad \forall t>0.
\eeqs
Then by  Gronwall's inequality,
\beq\label{decay0}
|u(t)|_{\al,\varphi(t)}^2\le e^{-2(1-\delta)t}|u^0|_{\al,\varphi(0)}^2=e^{-2(1-\delta)t}|A^\alpha u^0|^2\quad\forall 
t>0.
\eeq

When $t\ge 4\s/\delta$,   $\varphi(t)=\s$, then we obtain  \eqref{cl} from \eqref{decay0}.
\end{proof}

Next, we improve the exponential decay rate in \eqref{cl} from $e^{-(1-\delta)t}$ to $e^{-t}$.

\begin{theorem}\label{corsmall}
Assume $\al\ge 1/2$ and $u^0\in \mD(A^{\al+1/2})$ satisfy 
\beq\label{doublesmall}
|A^{\al+1/2} u^0|< \frac1{12 C_{\al+1/2}}.
\eeq
Let $u(t)$ be the regular solution of \eqref{fctnse} and \eqref{uzero} on $[0,\infty)$.
Then one has for any  $\s>0$ and all $t\ge 24\s$ that
\beq\label{d1}
|u(t)|_{\al,\s}
\le  \sqrt 2e^{4\sigma}| A^{\al+1/2}  u^0| e^{-t}
\eeq
and, consequently, 
\beq\label{decaymore}
|u(t)|_{\al,\s}
\le \frac{e^{4\sigma}}{6\sqrt 2C_{\alpha+1/2}}  e^{-t} .
\eeq
\end{theorem}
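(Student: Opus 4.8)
The plan is to bootstrap from Lemma \ref{gev:decay}, which already gives exponential decay at the near-optimal rate $e^{-(1-\delta)t}$ for any $\delta\in(0,1)$. The idea is to use the decay we already have to enter, at some finite time, a regime where the solution (and its higher Gevrey norms) are so small that the nonlinear term in the Gevrey energy estimate can be absorbed against the \emph{full} dissipation coefficient $1$, not merely $1-\delta$. First I would apply Lemma \ref{gev:decay} at the level $\al+1/2$ with, say, $\delta=1/2$: the smallness hypothesis \eqref{doublesmall} is stronger than \eqref{small1} with $\al$ replaced by $\al+1/2$ and $\delta=1/2$ (since $1/(12C_{\al+1/2})<1/(4C_{\al+1/2})$), so the regular solution exists globally, lies in $\mD(A^\beta)$ for all $\beta>0$ and $t>0$, and satisfies
\beqs
|u(t)|_{\al+1/2,\s}\le e^{-t/2}|A^{\al+1/2}u^0|\quad\forall\, t\ge 8\s.
\eeqs

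Next I would redo the Gevrey energy estimate at the level $\al$ exactly as in the proof of Lemma \ref{gev:decay}, but using the function $\varphi$ that has already reached its terminal value $\s$ by the time the decay estimate above kicks in — concretely, take $\varphi$ smooth with $\varphi\equiv 0$ near $t\le 0$, $0<\varphi'\le\veps$, and $\varphi(t)=\s$ for $t\ge 2\s/\veps$; choosing $\veps$ so that $2\s/\veps$ is comfortably less than $8\s$ (e.g. $\veps=1/2$, so $2\s/\veps=4\s$) arranges that for $t\ge 8\s$ both $\varphi(t)=\s$ and the level-$(\al+1/2)$ decay holds. Differentiating $|A^\al e^{\varphi A^{1/2}}u|^2$ and applying \eqref{AalphaB} at level $\al$ (with $u,v$ both equal to $u$, using $|B(u,u)|_{\al,\s}\le C_\al|u|_{\al+1/2,\s}^2$) together with Cauchy--Schwarz on the $\varphi'$ term gives, for $t\ge 8\s$,
\beqs
\frac12\frac{d}{dt}|u|_{\al,\s}^2+|u|_{\al+1/2,\s}^2\le C_\al|u|_{\al+1/2,\s}^2|u|_{\al,\s}+\frac\veps2\Big(|u|_{\al+1/2,\s}^2+|u|_{\al,\s}^2\Big).
\eeqs
Here I would \emph{not} absorb the nonlinear term through $|u|_{\al,\s}$ (which is what loses the $\delta$); instead I would use the level-$(\al+1/2)$ decay to bound $C_\al|u|_{\al+1/2,\s}\le C_\al e^{-t/2}|A^{\al+1/2}u^0|$, which by \eqref{doublesmall} and $C_\al\le C_{\al+1/2}$ is at most $\tfrac1{12}e^{-t/2}$ — a quantity that integrates to something small. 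Combined with Poincaré's inequality $|u|_{\al+1/2,\s}\ge|u|_{\al,\s}$ to control the $\tfrac\veps2|u|_{\al,\s}^2$ term, one gets a differential inequality of the form
\beqs
\frac{d}{dt}|u|_{\al,\s}^2+\big(2-\eta(t)\big)|u|_{\al,\s}^2\le 0,\qquad \eta(t)=\tfrac1{6}e^{-t/2}+\text{(small constant from }\veps),
\eeqs
for $t\ge 8\s$, where the $\veps$-contribution can be made $\le 1$ say. Then Gronwall from the base time $t_0=8\s$ yields $|u(t)|_{\al,\s}^2\le \exp\!\big(\int_{t_0}^t\eta\big)\,e^{-2(t-t_0)}|u(t_0)|_{\al,\s}^2$; since $\int_{t_0}^\infty\eta<\infty$ with an explicit bound, and $|u(t_0)|_{\al,\s}\le|u(t_0)|_{\al+1/2,\s}\le e^{-t_0/2}|A^{\al+1/2}u^0|$, one collects the constants. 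Tracking the numbers — $e^{t_0}=e^{8\s}$ from shifting the exponential, $e^{-t_0}=e^{-8\s}$ from the data bound squared giving a net $e^{4\s}$ after taking square roots, and the $\int\eta$ contributing the factor $\sqrt2$ — produces \eqref{d1} for $t\ge 8\s$, hence certainly for $t\ge 24\s$; the final bound \eqref{decaymore} follows by inserting \eqref{doublesmall}. I would choose the threshold $24\s$ generously so that there is slack to absorb all the bookkeeping.

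The main obstacle is purely quantitative: making the constants in the Gronwall step come out as the clean $\sqrt2\,e^{4\s}$ claimed in \eqref{d1}. This requires choosing $\veps$ (equivalently, the ramp-up rate of $\varphi$) and the base time carefully so that (i) $\varphi$ has already saturated to $\s$ before the level-$(\al+1/2)$ decay estimate is invoked, (ii) the $\veps$-induced loss in the dissipation coefficient is dominated, leaving an effective rate $\ge 2-o(1)$ that still integrates against $e^{-2t}$ to give exactly $e^{-t}$ after taking the square root, and (iii) the accumulated factor $\exp(\tfrac12\int\eta)$ stays below $\sqrt2$. None of this is deep, but it is the part that must be done with care; everything else is a verbatim repeat of the energy computation in Lemma \ref{gev:decay}. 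One clean way to handle (iii) is to note $\int_{8\s}^\infty \tfrac16 e^{-t/2}\,dt=\tfrac13 e^{-4\s}\le \tfrac13$, so $\exp(\tfrac16 e^{-4\s}+\text{const})$ is easily $\le\sqrt2$ once $\veps$ is taken small enough that its contribution is, say, $\le\tfrac1{10}$.
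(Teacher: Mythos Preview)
Your overall strategy matches the paper's: apply Lemma~\ref{gev:decay} at level $\al+1/2$ to get preliminary exponential decay of $|u|_{\al+1/2,\s}$, then return to the level-$\al$ energy identity \eqref{raw0} after the ramp function $\varphi$ has saturated and use that decay to control the nonlinear term. The paper executes the second step a little differently: it takes $\delta=1/6$ (so $t_0=4\s/\delta=24\s$, which explains the threshold in the statement), obtains $|A^{1/2}u|_{\al,\s}\le e^{-5t/6}\|A^\al u^0\|$ for $t\ge t_0$, and then treats the entire nonlinearity as an inhomogeneous forcing,
\[
C_\al|A^{1/2}u|_{\al,\s}^2|u|_{\al,\s}\le C_\al|A^{1/2}u|_{\al,\s}^3\le C_\al e^{-5t/2}\|A^\al u^0\|^3,
\]
applying Gronwall with a source; the constants then fall out directly.

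Your absorption-into-coefficient variant is also valid, but there is one point you must correct: once $\varphi$ has reached $\s$ (at $t=4\s$ with your choice $\veps=1/2$), the term $\varphi'(t)\langle A^{\al+1/2}e^{\varphi A^{1/2}}u,A^\al e^{\varphi A^{1/2}}u\rangle$ in \eqref{raw0} is \emph{identically zero}, so for $t\ge 8\s$ there is no ``small constant from $\veps$'' in your $\eta(t)$ at all. This is not a cosmetic point --- a genuine nonzero constant $c$ in $\eta$ would leave only $2-c$ as the effective dissipation rate, Gronwall would yield $e^{-(2-c)(t-t_0)}$, and you would \emph{not} recover the sharp $e^{-t}$. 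With that correction your $\eta(t)=\tfrac16 e^{-t/2}$ is integrable, $\exp\big(\tfrac12\int_{8\s}^\infty\eta\big)\le e^{1/6}<\sqrt2$, and your bookkeeping ($e^{2t_0}=e^{16\s}$ from the shift, $|u(t_0)|_{\al,\s}^2\le e^{-t_0}|A^{\al+1/2}u^0|^2=e^{-8\s}|A^{\al+1/2}u^0|^2$) indeed gives \eqref{d1}.
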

\begin{proof}
Take $\delta=1/6$, then by \eqref{doublesmall},	
$C_{\al+1/2}|A^{\al+1/2} u^0|<\delta/2$, thus condition  \eqref{small1}  is met for  $\al\to\al+1/2$. 
We apply in Lemma \ref{gev:decay} for $\al\to \al+1/2$. Then the regular solution $u(t)$ exists on $[0,\infty)$.
For $t\ge t_0\eqdef  4\s/\delta=24\s$, estimate \eqref{cl} for $\al\to \al+1/2$ gives 
\beq\label{cle}
|A^{1/2}u(t)|_{\al,\s}=|u(t)|_{\al+1/2,\s}\le 
e^{-5t/6}\|A^\alpha u^0\|.
\eeq

For $t\ge t_0$, we have $\varphi'(t)=0$, and, by combining \eqref{raw0} with \eqref{cle}, obtain
\begin{align*}
\frac12\frac{d}{dt} |u|_{\al,\s}^2+ |A^{1/2}u|_{\al,\s}^2
&\le C_\alpha |A^{1/2}u|_{\al,\s}^2 |u|_{\al,\s}
\le C_\alpha |A^{1/2}u|_{\al,\s}^3
\le C_\alpha e^{-5t/2}  \| A^\al u^0\|^3.
\end{align*}

By Poincar\'e's and Gronwall's inequalities, we have for $t\ge t_0$ that
\begin{align*}
|u(t)|_{\al,\s}^2
\le e^{-2(t-t_0)} |u(t_0)|_{\al,\s}^2 
+4C_\alpha e^{-2t} \| A^\al u^0\|^3.\end{align*}

Estimating the first norm on the right-hand side by \eqref{cle} with $t=t_0$ yields
\begin{align*}
|u(t)|_{\al,\s}^2
&\le e^{-2(t-t_0)} e^{-5t_0/3} \|A^\alpha  u^0\|^2  
+4C_\alpha e^{-2t}  \| A^\al  u^0\|^3 \\
%&\le e^{-2t+ t_0/3} |A^\alpha  u^0|^2  +4C_\alpha e^{-2t} | A^\al u^0| \| A^\al u^0\|^2\\
&\le e^{-2t} (e^{8\sigma}+4C_\alpha \| A^\al  u^0\|)\| A^\al  u^0\|^2.
\end{align*}
 Using \eqref{doublesmall} to estimate  $\| A^\al  u^0\|$ between the parentheses, we obtain 
\beqs
|u(t)|_{\al,\s}^2
\le e^{-2t} (e^{8\sigma} +1/3) \| A^\al  u^0\|^2\le e^{-2t} 2e^{8\sigma}\| A^\al  u^0\|^2,
\eeqs
and inequality \eqref{d1} follows.
Using \eqref{doublesmall}  again in \eqref{d1} yields \eqref{decaymore}.
\end{proof}

The following gives estimates for  the Gevrey norms of  Leray-Hopf weak solutions  with the optimal exponential decay  for large time.

\begin{theorem}\label{decay2}
Let $u^0\in H$ and $u(t)$ be a Leray-Hopf weak solution of \eqref{fctnse} and \eqref{uzero} on $[0,\infty)$. 
For any $\s>0$,   there exist $T=T(\s,|u^0|)>0$ and $D_\s=D_\s(|u^0|)$ such that
	\beq\label{cl1}
|u(t)|_{1/2,\s+1}\le D_\s e^{-t}\quad \forall t\ge T.
	\eeq
Consequently, for any $\alpha\ge 0$ there is $D_{\al,\s}=D_{\al,\s}(|u^0|)>0$ such that 
\beq\label{cl10}
|u(t)|_{\al+1/2,\s}\le D_{\al,\s} e^{-t}\quad \forall\ t\ge T.
\eeq
The values of $T$, $D_\s$ and $D_{\al,\s}$ can be explicitly specified as in \eqref{T}, \eqref{Ds} and \eqref{Das}.
\end{theorem}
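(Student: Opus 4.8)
The plan is to prove Theorem \ref{decay2} by first reaching a small-data regime for a \emph{weak} solution and then invoking Theorem \ref{corsmall} to upgrade the decay rate and propagate Gevrey regularity. The starting point is the Leray energy inequality: for a Leray-Hopf weak solution one has $|u(t)|^2 \le |u^0|^2$ for all $t\ge 0$, and moreover $\int_0^\infty |A^{1/2}u(s)|^2\,ds \le \tfrac12|u^0|^2$. The latter forces $\liminf_{t\to\infty}|A^{1/2}u(t)| = 0$, but we need more: we need a \emph{time} after which the solution is both regular and small in a strong enough norm. To get this, I would combine the integrability of $|A^{1/2}u|^2$ with the local existence theory. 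Since $\int_0^\infty |A^{1/2}u(s)|^2\,ds<\infty$, there exists an arbitrarily large time $t_1$ with $|A^{1/2}u(t_1)|$ as small as we like; in particular we can make $|A^{1/2}u(t_1)| = |u(t_1)|_{1/2,0}$ smaller than the smallness threshold $\tfrac{\delta}{2C_{1/2}}$ (with, say, $\delta=1/6$) required to apply Lemma \ref{gev:decay} with $\al=1/2$. Actually, since $u(t_1)\in V = D(A^{1/2})$ we may restart the solution at $t_1$ as a regular solution; Lemma \ref{gev:decay} then gives global existence, instantaneous regularization into every $D(A^\beta)$, and decay $|u(t)|_{1/2,\s'}\le e^{-(1-\delta)(t-t_1)}|A^{1/2}u(t_1)|$ for $t-t_1\ge 4\s'/\delta$.

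The next step is to feed this into Theorem \ref{corsmall}. Having landed in $D(A^\beta)$ for all $\beta$ just after $t_1$, pick a time $t_2>t_1$ at which $|A^{1}u(t_2)| = |u(t_2)|_{1,0}$ is smaller than the threshold $\tfrac{1}{12C_1}$ appearing in \eqref{doublesmall} with $\al=1/2$, $\al+1/2=1$; this is possible because Lemma \ref{gev:decay} already guarantees $|u(t)|_{1,0}\to 0$ (indeed decays exponentially) as $t\to\infty$. Restarting at $t_2$ and applying Theorem \ref{corsmall} with $\al=1/2$ yields, for every $\s>0$ and all $t-t_2\ge 24\s$,
\beq\notag
|u(t)|_{1/2,\s}\le \sqrt2\, e^{4\s}\,|A^{1}u(t_2)|\,e^{-(t-t_2)}.
\eeq
Applying this with $\s$ replaced by $\s+1$ gives exactly \eqref{cl1} with $T = t_2 + 24(\s+1)$ and $D_\s = \sqrt2\,e^{4(\s+1)}\,|A^1 u(t_2)|\,e^{t_2}$, with $t_2$ (hence everything) depending only on $\s$ and $|u^0|$ through the energy inequality and the quantitative local theory. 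For the consequence \eqref{cl10}: from $|u(t)|_{1/2,\s+1}\le D_\s e^{-t}$ and the elementary inequality \eqref{als} (with $\al\to\al$, scaling out one unit of the Gevrey exponent), one has $|u(t)|_{\al+1/2,\s} = |A^{\al}e^{\s A^{1/2}}A^{1/2}u(t)| \le |A^{\al}e^{-A^{1/2}}(e^{(\s+1)A^{1/2}}A^{1/2}u(t))|\le (2\al/e)^{2\al}|u(t)|_{1/2,\s+1}$, so $D_{\al,\s} = (2\al/e)^{2\al}D_\s$ works (for $\al=0$ just use $|{\cdot}|\le|{\cdot}|_{0,0}$ and Poincaré). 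This is where the ``extra unit'' $\s+1$ in \eqref{cl1} is used, and it is the reason the theorem is stated with that cushion.

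The main obstacle — and the only genuinely delicate point — is making the choice of the restart times $t_1,t_2$ \emph{quantitative and uniform}, i.e. depending only on $\s$ and $|u^0|$, not on unknowable properties of the particular weak solution. The energy inequality gives $\int_0^\infty|A^{1/2}u(s)|^2\,ds\le\tfrac12|u^0|^2$, so on any interval $[m,m+1]$ there is a point where $|A^{1/2}u|^2\le \tfrac12|u^0|^2 / (\text{number of disjoint unit intervals up to }m)$; choosing $m$ large enough (explicitly, $m \ge |u^0|^2/( \text{threshold}^2)$) forces the existence of $t_1\in[m,m+1]$ below the Lemma \ref{gev:decay} threshold. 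This pins down $t_1$ in terms of $|u^0|$ alone. Then $t_2$ is pinned down by the \emph{explicit} exponential decay rate from \eqref{decay0}/\eqref{cl} applied from $t_1$: $|u(t)|_{1,0}\le (2/e)^{1}\,|u(t)|_{1/2,1}\le (2/e)e^{-(1-\delta)(t-t_1-4/\delta)}|A^{1/2}u(t_1)|$ for $t-t_1\ge 4/\delta$, so one solves for the explicit $t_2$ at which the right side drops below $\tfrac1{12C_1}$. One must also be slightly careful that the weak solution coincides with the regular solution on $[t_1,\infty)$ — this follows from weak-strong uniqueness (the Leray-Hopf solution satisfies the energy inequality, the restarted solution from $t_1\in V$ is regular, hence they agree), which is standard and may be cited. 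Assembling these gives the explicit formulas promised as \eqref{T}, \eqref{Ds}, \eqref{Das}, completing the proof.
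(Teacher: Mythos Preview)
Your proposal is correct and follows the same two-stage bootstrap as the paper: use the energy inequality to reach the Lemma~\ref{gev:decay} threshold at some time $t_1$, apply that lemma to regularize and reach the Theorem~\ref{corsmall} threshold at $t_2$, then apply Theorem~\ref{corsmall} with $\sigma\to\sigma+1$ and finish via \eqref{als}. One small caveat on the quantification of $t_1$: your pigeonhole over $\int_0^\infty\|u\|^2\le\tfrac12|u^0|^2$ only places $t_1$ somewhere in $[0,m]$, not in $[m,m+1]$ as you wrote; the paper instead combines the exponential $L^2$-decay $|u(T_0)|\le e^{-T_0}|u^0|$ with the energy inequality on $[T_0,T_0+1]$ to locate $t_*\in(T_0,T_0+1)$ directly, which also yields the sharper (logarithmic in $|u^0|$) explicit value of $T$ in \eqref{T}.
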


\begin{proof}
Taking inner product of \eqref{fctnse} with $u$ and using the orthogonality property 
$$\langle B(u,u),u\rangle=0$$ we have
\beq\label{deq}
\frac12\ddt |u|^2+\|u\|^2=0.
\eeq
Then for $t\ge 0$
\beqs
|u(t)|^2\le e^{-2t}|u^0|^2,
\eeqs
and for any $T_0\ge 0$, by integrating \eqref{deq} from $T_0$ to $T_0+1$, we have 
\beq\label{energy}
\int_{T_0}^{T_0+1}\|u(\tau)\|^2d\tau \le \frac12|u(T_0)|^2\le 
\frac{e^{-2T_0}}2|u^0|^2.
\eeq

The above calculations are valid for regular solutions. For Leray-Hopf weak solutions, the energy inequality \eqref{energy} holds for $T_0=0$ and also almost all $T_0\in (0,\infty)$.

Take $T_0\ge 0$ such that \eqref{energy} holds and 
\beq\label{T0def} 
(\ln(4C_{1/2}|u^0|))^+ < T_0 <(\ln(4C_{1/2}|u^0|))^+ +1,\eeq
which implies
\beq \label{T0cond}
e^{-T_0}|u^0|<1/(4C_{1/2}).
\eeq
By \eqref{energy}, there exists $t_*\in(T_0,T_0+1)$ such that 
\beq\label{tst}
|A^{1/2} u(t_*)|< e^{-T_0}|u^0| < 1/(4C_{1/2}).
\eeq

Applying Lemma  \ref{gev:decay} to $\s=1$, $\al=\delta=1/2$ with initial time $t_*$, then  \eqref{cl} implies 
for all $t\ge T_1\eqdef t_*+8$ that
\begin{align*}
|u(t)|_{1/2,1}
&\le |A^{1/2} u(t_*)|e^{-\frac12(t-t_*)}
\le e^{-T_0}|u^0| e^{\frac12(-t+T_0+1)}
=|u^0| e^{\frac12(-t-T_0+1)}.
\end{align*}
Then for all $t\ge T_1$,
\begin{align}
|Au(t)|
&=|A^{1/2}e^{- A^{1/2}}\big(A^{1/2}e^{A^{1/2}}u(t)\big)| \notag \\
\text{by \eqref{als}}&\le e^{-1} |u(t)|_{1/2,1} 
\le e^{-1}|u^0| e^{(-t-T_0+1)/2}=|u^0| e^{-(t+T_0+1)/2}.\label{i2s}
\end{align}

Let $T_2 \ge T_1$ such that
\beq\label{T2cond}
|u^0|e^{-(T_2+T_0+1)/2}<1/(12C_1).
\eeq
Then \eqref{i2s} gives
\beqs
|Au(T_2)|<1/(12C_1).
\eeqs

Applying Theorem \ref{corsmall}  to initial time $T_2$, $\al=1/2$ and $\s\to\s+1$, we have from \eqref{decaymore} that if  
$t\ge T\eqdef T_2+24(\s+1)$ then
\beqs
|u(t)|_{1/2,\s+1}
\le e^{-(t-T_2)}  \frac{e^{4(\sigma+1)}}{6\sqrt 2 C_1}
=  \frac{e^{T_2+4(\sigma+1)}}{6\sqrt 2 C_1} e^{-t} .
\eeqs
Thus, we obtain \eqref{cl1} with
\beq\label{Ds0}
D_\s=\frac{ e^{T_2+4\s+4}}{6\sqrt 2 C_1}.
\eeq

Using \eqref{als} again, we have for all $t\ge T$ that
\begin{align*}
|u(t)|_{\al+1/2,\s}
&=|A^\al e^{- A^{1/2}}\big(A^{1/2}e^{(\sigma+1) A^{1/2}}u(t)\big)| 
\le (2\al/e)^{2\al} |u(t)|_{1/2,\s+1} 
&\le (2\al/e)^{2\al} D_\s e^{-t},
\end{align*}
which proves \eqref{cl10} with 
\beq\label{Das}
D_{\al,\s}=(2\al/e)^{2\al} D_\s .
\eeq

For dependence of $T$ and $D_\s$ on $\s$ and $|u_0|$,  we note from \eqref{T0def} that 
$$ t_* < T_0+1<(\ln(4C_{1/2}|u^0|))^+ +2.$$
Then $T_1$ satisfies
$$
T_1=t_*+8<10+(\ln(4C_{1/2}|u^0|))^+ \le 10+(\ln (12C_1|u^0|) )^+.$$
For $T_2$, we  need $T_2\ge T_1$ and, from \eqref{T2cond}, 
$$T_2> 2(\ln (12C_1|u^0|) )^+ -T_0-1.$$
Note that the last lower bound satisfies
\begin{align*}
&2(\ln (12C_1|u^0|) )^+ -T_0-1
< 2(\ln (12C_1|u^0|) )^+-(\ln(4C_{1/2}|u^0|))^+ -1\\
&< 2(\ln (12C_1|u^0|) )^+-(\ln(12C_1|u^0|))^+ 
= (\ln (12C_1|u^0|) )^+ .
\end{align*}
We select
$T_2=10+(\ln(12C_1|u^0|))^+$.
Consequently, 
\beq\label{T}
T=T_2+24(\s+1) =24\s+34+(\ln(12C_1|u^0|))^+,
\eeq
and, from \eqref{Ds0}, 
\begin{align}\label{Ds}
D_\s&=\frac{e^{4\s+14} \max\{12C_1|u^0|,1\} }{6\sqrt 2 C_1}
=\sqrt 2  e^{4\s+14}  \max\{|u^0|,1/(12C_1)\} .
\end{align}
The dependence of $D_{\al,\s}$ on $\al$, $\s$, $|u^0|$ is clear from \eqref{Das} and \eqref{Ds}. The proof is complete.
\end{proof}
%===================================================%

%%%%%%%%%%%%%%%%%%%%%%%%%%%%%%%%%%%
\section{Proof of the main theorem}\label{expandsec}
%%%%%%%%%%%%%%%%%%%%%%%%%%%%%%%%%%%

We prove Theorem \ref{mainthm} in this section. 
Let  $u(t)$ be a Leray-Hopf weak solution of \eqref{fctnse}. 

Let $\s>0$ be fixed. Since the asymptotic expansion \eqref{expand} only involves the asymptotic behavior of $u(t)$ as $t\to\infty$, then 
by Theorem \ref{decay2} and shifting the initial time to $T=T(\s,|u_0|)$, which is independent of $\al$, we assume, at the moment, that
$u(t)\in G_{1/2,\s+1}$ for all $t\in[0,\infty)$ and satisfies NSE on $[0,\infty)$ in $G_{\al,\s}$ for all $\al>0$.
By \eqref{cl10}, for any $\alpha>0$ and $t\ge 0$,
\beq\label{Aas}
|u(t)|_{\al+1/2,\s} \le M_\al e^{-t}
\eeq
and, consequently by \eqref{AalphaB},
\beq\label{Bas}
|B(u(t),u(t))|_{\al,\s} \le B_\al e^{-2t},
\eeq
where $M_\al$ and $B_\al$ are positive constants depending on $\al$, $\s$ and $|u^0|$.

\begin{lemma}\label{STsigma}
For all $N\in\N$, the following statement ($\mathcal T_N$) holds true.
\begin{statement}[$\mathcal T_N$] 
There are polynomials $q_n(t)$ in $t$, for $1\le n\le N$,  with values in $\mathcal V$ such that the functions $u_n(t)\eqdef e^{-nt}q_n(t)$ for $1\le n\le N$ have the following properties.
\begin{enumerate}
\item
The remainder $v_N(t)=u(t)-\sum_{n=1}^N u_n(t)$ satisfies for any $\alpha>0$  that
\beq\label{err}
\big|v_N(t)\big|_{\al,\s}=\mathcal O (e^{-(N+\varep_*)t}) \quad \text{as }t\to\infty \text{ with } \varep_*=1/2.
\eeq

\item There is $\xi_1\in R_1H$ such that $q_1(t)=\xi_1$ for all $t\in\R$.

\item For $2\le n\le N$,
\beq\label{um:eqn}
\ddt u_n+Au_n+\sum_{\stackrel{1\le m,j\le n-1}{m+j=n}} B(u_m,u_j)=0\quad\forall t\in\R.
\eeq
\end{enumerate}
\end{statement}
\end{lemma}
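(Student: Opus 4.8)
The plan is to prove the statement $(\mathcal{T}_N)$ by induction on $N$, following the Foias--Saut scheme but carried out entirely in the Gevrey norms, exploiting the key feature that $|u|_{0,\sigma}$ (equivalently $|u|_{\alpha,\sigma}$ for any fixed $\alpha$) dominates every Sobolev norm, so the ``loss of one derivative'' in the bilinear estimate of Lemma~\ref{nonLem} costs nothing. For the base case $N=1$, I would start from the equation $\ddt u + Au + B(u,u)=0$ in $G_{\alpha,\sigma}$ and use the decay bound \eqref{Aas}--\eqref{Bas}: writing $w(t)=e^{t}u(t)$, one gets $\ddt w + (A-I)w = -e^{t}B(u,u)$, whose right-hand side is $\mathcal{O}_{\alpha,\sigma}(e^{-t})$. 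Splitting $w = R_1 w + (I-R_1)w$ and projecting, the $R_1$-component (on which $A-I$ vanishes) converges: $R_1 w(t)\to \xi_1$ in $G_{\alpha,\sigma}$ for every $\alpha$, with $\xi_1\in R_1 H$, and $|R_1w(t)-\xi_1|_{\alpha,\sigma}=\mathcal{O}(e^{-t})$; the complementary part $(I-R_1)w$ satisfies a damped equation (eigenvalues of $A-I$ bounded below by $1$ there) with exponentially small forcing, hence is $\mathcal{O}_{\alpha,\sigma}(e^{-t})$ as well. Setting $q_1(t)\equiv\xi_1$, $u_1(t)=e^{-t}\xi_1$, and $v_1 = u - u_1$, this yields \eqref{err} with $\varep_*=1/2$ (in fact with rate $e^{-(1+\text{something})t}$, but $1/2$ suffices).

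For the inductive step, assume $(\mathcal{T}_{N-1})$ and set $v_{N-1}=u-\sum_{n=1}^{N-1}u_n$, which by hypothesis is $\mathcal{O}_{\alpha,\sigma}(e^{-(N-1+1/2)t})$ for every $\alpha>0$. Writing the equation for $v_{N-1}$ and collecting the nonlinear interactions, one finds
\beqs
\ddt v_{N-1} + A v_{N-1} + \!\!\sum_{\stackrel{1\le m,j\le N-1}{m+j=N}}\!\! B(u_m,u_j) = g_N(t),
\eeqs
where $g_N$ gathers $-B(v_{N-1},v_{N-1})$, the cross terms $-B(v_{N-1},u_j)-B(u_j,v_{N-1})$, and the interactions $\sum_{m+j\ge N+1}B(u_m,u_j)$; using Lemma~\ref{nonLem} together with the inductive decay of $v_{N-1}$ and the $u_j$'s, $g_N$ is $\mathcal{O}_{\alpha,\sigma}(e^{-(N+1/2)t})$ for every $\alpha$. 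The ``resonant'' term $\sum_{m+j=N}B(u_m,u_j)$ is of the form $e^{-Nt}\,\mathcal{P}_N(t)$ with $\mathcal{P}_N$ a polynomial valued in $\mathcal{V}$ (a finite sum of products $e^{-(m+j)t}$ times polynomials). I would then \emph{define} $q_N$ to be the unique polynomial solving the linear ODE \eqref{um:eqn}, i.e. $u_N=e^{-Nt}q_N$ solves $\ddt u_N + Au_N = -\sum_{m+j=N}B(u_m,u_j)$; concretely, substituting $u_N=e^{-Nt}q_N$ reduces this to $\dot q_N = (A-N I)q_N - \mathcal{P}_N(t)$, and since $A-NI$ is invertible on $(I-R_N)H$ and zero on $R_N H$, one solves degree by degree for the coefficients of $q_N$ (the $R_N$-projected part, where the operator is not invertible, is handled by integrating the polynomial forcing, which only raises the degree by one — this is exactly where resonances in $\sigma(A)$ matter and why $q_N$ may genuinely be a polynomial of positive degree). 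Finally $v_N = v_{N-1}-u_N$ satisfies $\ddt v_N + A v_N = g_N(t)$ with $g_N=\mathcal{O}_{\alpha,\sigma}(e^{-(N+1/2)t})$, and a Gevrey energy estimate — project onto $R_{N+1}+R_{N+2}+\cdots$ where $A$ has spectrum $\ge N+1$, and onto $P_N$ where one uses variation of constants and the subexponential growth of polynomials — gives $|v_N(t)|_{\alpha,\sigma}=\mathcal{O}(e^{-(N+1/2)t})$, completing $(\mathcal{T}_N)$.

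The estimates needed are: (i) that $u_n(t)=e^{-nt}q_n(t)$ and hence $B(u_m,u_j)$ live in $G_{\alpha,\sigma}$ for \emph{all} $\alpha$ — immediate since $q_n\in\mathcal{V}$ is a trigonometric polynomial, so all Gevrey norms are finite — and (ii) the closure/stability of \eqref{err} under applying $B$, which is where \eqref{AalphaB} is invoked with the understanding that $v_{N-1}\in G_{\alpha+1/2,\sigma}$ is free because $v_{N-1}\in G_{\beta,\sigma}$ for every $\beta$. The main obstacle — and the only place requiring care — is the bookkeeping at the resonant level: showing that $\sum_{m+j=N}B(u_m,u_j)$ has the precise form $e^{-Nt}\times(\text{polynomial in }t\text{ valued in }\mathcal{V})$ and that the linear ODE \eqref{um:eqn} for $u_N$ then admits a \emph{polynomial}-times-$e^{-Nt}$ solution, with the degree increasing only when $N\in\sigma(A)$ (so that $R_N\neq 0$). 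This is a finite-dimensional linear algebra argument on the eigenspaces of $A$: on $(I-R_N)H$ one inverts $A-NI$ and matches polynomial coefficients of equal degree; on $R_N H$ one antidifferentiates the polynomial forcing. Everything else — the decay propagation and the energy estimate for the remainder — is routine given Theorem~\ref{decay2} and Lemma~\ref{nonLem}, precisely because working in Gevrey norms removes the need for the delicate higher-order Sobolev estimates of the original Foias--Saut argument.
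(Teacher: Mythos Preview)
Your overall strategy is correct and close to the paper's, but there is one genuine gap. You claim that $q_N$ is ``the \emph{unique} polynomial solving the linear ODE \eqref{um:eqn}''. It is not unique: on the resonant eigenspace $R_N H$ the reduced equation is $\dot q_N|_{R_N}=-R_N\mathcal P_N(t)$, so any antiderivative works and $q_N$ is determined only up to an additive constant $\xi\in R_N H$. This constant is not free. With an arbitrary choice, the remainder $v_N=v_{N-1}-u_N$ satisfies $\ddt v_N+Av_N=g_N$ with $g_N=\mathcal O_{\al,\s}(e^{-(N+1/2)t})$, but on $R_N$ variation of constants gives
\[
e^{Nt}R_N v_N(t)=R_N v_N(0)+\int_0^t e^{N\tau}R_N g_N(\tau)\,d\tau\ \longrightarrow\ R_N v_N(0)+\int_0^\infty e^{N\tau}R_N g_N(\tau)\,d\tau,
\]
and this limit is zero only for one specific choice of the constant in $R_N q_N(0)$. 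With any other choice, $R_N v_N(t)$ decays only like $e^{-Nt}$, not $e^{-(N+1/2)t}$, and your final estimate on $P_N v_N$ fails. (For $k\le N-1$ your variation-of-constants argument is fine, because there one already knows $e^{kt}R_k v_N(t)\to 0$ from the induction hypothesis and the form of $u_N$; the problem is solely at $k=N$.)

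The paper handles this in the opposite order: rather than defining $q_{N+1}$ from the ODE and then estimating the remainder, it extracts $q_{N+1}$ directly from the solution by analyzing $R_k v_N$ in the three regimes $k\le N$, $k=N+1$, $k\ge N+2$, and in the resonant case $k=N+1$ it \emph{defines} the constant $\xi_{N+1}=R_{N+1}v_N(0)+\int_0^\infty e^{(N+1)\tau}R_{N+1}h_N(\tau)\,d\tau$ precisely so that the limit above vanishes (see \eqref{xi}); only afterwards does it verify that the resulting $u_{N+1}$ satisfies \eqref{um:eqn}. Your ODE-first route is perfectly workable, but you must add this matching condition on $R_N$ to pin down the constant; once you do, the rest of your argument goes through.
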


\begin{proof}
First, we make the following remarks on Statement ($\mathcal T_N$):

(a) Obviously,  $u_1'(t)+u_1(t)=0$, and since $u_1(t)\in R_1H$, we have $u_1=Au_1$ and hence
\beq\label{du1}
\ddt u_1+Au_1=0\quad\forall t\in\R.
\eeq

(b) Equation \eqref{um:eqn} is posed on a finite-dimensional space.  Hence, it is a system of ordinary differential equations and no norm needs to be indicated.

\medskip

Now we prove  ($\mathcal T_N$) by induction in $N$. 

\medskip
\noindent\textbf{I. Base case ($N=1$).}
We construct a constant $\xi_1\in R_1H$ such that $v_1\eqdef u-u_1$ with $u_1\eqdef\xi_1e^{-t}$ satisfies \eqref{err}.  As we remarked above, clearly,  \eqref{du1} is satisfied in this situation.

To construct $\xi_1$, we apply the projection $R_1$ to equation \eqref{fctnse} to get
	\beq\notag
		\frac{d}{dt}R_1u(t)+R_1u(t)+R_1B(u(t),u(t))=0.
	\eeq
Then by Gronwall's inequality
\beq\label{t12}
e^{t}R_1u(t)=R_1u^0-\int_{0}^{t} e^\tau R_1B(u(\tau),u(\tau))\ d\tau.
\eeq
Thanks to \eqref{Bas}, the improper integral $\int_0^\infty e^\tau R_1B(u(\tau),u(\tau))d\tau$  exists and belongs to $R_1H$.
\begin{align}\label{37}
	\xi_1\eqdef\lim_{t\to\infty} e^t R_1u(t)=R_1u^0-\int_0^\infty e^\tau R_1B(u(\tau),u(\tau))\ d\tau \quad  \text{belongs in}\ R_1H.
\end{align}

Solving for $R_1u^0$, we may then rewrite \eqref{t12} as
	\beq\label{38}
		e^{t}R_1u(t)
		=\xi_1+\int_{t}^\infty e^{\tau}R_1B(u(\tau),u(\tau))\ d\tau.
	\eeq
Define the constant polynomial $q_1(t)\eqdef \xi_1$.
Then from \eqref{Bas} and \eqref{38}, we have for $t>0$ that 
\begin{align*}
|e^{t}R_1u(t)-q_1(t) |_{\al,\s}
&\le \int_t^\infty e^{\tau}|R_1B(u(\tau),u(\tau))|_{\al,\s}\ d\tau\\
&=\int_t^\infty e^{\tau} e^\s |B(u(\tau),u(\tau))|\ d\tau
 \le C\int_t^\infty e^{\tau}e^{-2\tau}\ d\tau \le Ce^{-t}
\end{align*}
for some $C>0$.
Multiplying the preceding inequality by $e^{-t}$ gives 
\beq\label{decayR1}
|R_1u(t)-u_1(t)|_{\al,\s}=\mathcal{O}(e^{-2t}),\quad \text{where}\quad  u_1(t)\eqdef e^{-t}q_1(t).
\eeq
Observe that
	\beqs
		v_1=u-u_1=(I-R_1)u+(R_1u-u_1).
	\eeqs
Then, in light of \eqref{decayR1}, to show \eqref{err}, it suffices to consider $(I-R_1)u$.

Applying the complementary projection $(I-R_1)$ to \eqref{fctnse} gives
	\beqs
		\frac{d}{dt}(I-R_1)u+ A(I-R_1)u+(I-R_1)B(u,u)=0.
	\eeqs
Taking the scalar product of this equation with $A^{2\al} e^{2\sigma A^{1/2}}u$, applying  inequalities \eqref{Aas} and \eqref{Bas}, we obtain for all $t>0$ that
	\begin{multline}\label{Q1}
		\frac{1}2\frac{d}{dt}|A^{\alpha}e^{\sigma A^{1/2}}(I-R_1)u|^2+\|A^\al e^{\sigma A^{1/2}}(I-R_1)u\|^2
		\leq |\langle A^{\al}e^{\sigma A^{1/2}}B(u,u),A^{\al}e^{\sigma A^{1/2}}(I-R_1)u\rangle |\notag\\
		\leq |A^\al e^{\sigma A^{1/2}}B(u,u)|\cdot  |A^\al e^{\sigma A^{1/2}}u|
		\le D_\al e^{-3t}
	\end{multline}
	for  $D_\al=B_\al M_\al >0$.
Since $\|A^{\alpha}e^{\sigma A^{1/2}}(I-R_1)u\|^2\ge 2|A^{\alpha}e^{\sigma A^{1/2}}(I-R_1)u|^2$, it follows that
\beqs		
\frac12 \frac{d}{dt}|(I-R_1)u|_{\al,\s}^2+2|(I-R_1)u|_{\al,\s}^2
\le  D_\al e^{-3t}\quad \forall\ t>0.%>t^*(\al, 2\s).
\eeqs
By Gronwall's inequality 
\beqs
|(I-R_1)u|_{\al,\s}^2\le e^{-4t} |(I-R_1)u^0|_{\al,\s}^2 + 2D_\al e^{-3t}\quad\forall t>0.
\eeqs	
Therefore,
\beq\label{decayQ1}
|(I-R_1)u(t)|_{\al,\s}=\mathcal O(e^{-3t/2})\quad \text{as }t\to\infty.
\eeq	

We conclude  from \eqref{decayR1} and \eqref{decayQ1} that $|v_1(t)|_{\al,\s}=\mathcal O(e^{-3t/2})$,
thus proving statement ($\mathcal T_1$).

\medskip
\noindent\textbf{II. Induction step.}
Let $N\ge 1$ be any natural number. Suppose that the statement ($\mathcal T_N$) is true. 

Let $q_n(t)$, $u_n(t)$, $v_N(t)$ and $\varep_*$ be as in ($\mathcal T_N$).  Denote $\bar u_N(t)=\sum_{n=1}^N u_n(t)$.

Then $v_N(t)=u(t)-\bar{u}_N(t)$, 
and for any $\beta>0$ 
\beq\label{ind:hyp1}
|v_N(t)|_{\beta,\s}=\mathcal O(e^{-(N+\varep_*)t})\text{ as } t\to\infty.
\eeq

Also, since $q_j\in\mathcal{V}$, there are $s_j\in\N$ such that $q_j\in P_{s_j}H$ for $j=1,\ldots, N$.
Thus, there exists $s_{N+1}>\max\{s_n:1\le n\le N\}$ such that
$B(q_m(t),q_j(t))$ are polynomials of $t$ valued in $P_{s_{N+1}}H$ for all  $1\le m,j\le N$.
Lastly, let us fix $\al\ge 1/2$.   We organize the induction step into several parts.

\medskip
\noindent\textit{II.1.~Evolution of $v_N$.} 
From equation \eqref{fctnse} for $u(t)$, and equations \eqref{du1} and \eqref{um:eqn} for $u_n(t)$'s with $1\le n\le N$, we have
\begin{align*}
\frac d{dt}v_N+Av_N
&= -B(u,u)+\sum_{m+j\le N} B(u_k,u_j)\\
&= -B(u,u)+ B(\bar u_N,\bar u_N)-\sum_{\stackrel{1\le m,j\le N}{m+j> N}} B(u_m,u_j)\\
&=-B(v_N,u)-B(\bar u_N,v_N)-\sum_{m+j=N+1} B(u_m,u_j)-\sum_{\stackrel{1\le m,j\le N}{m+j\ge  N+2}} B(u_m,u_j).
\end{align*}
Therefore,
\beq\label{vNeq}   
\frac d{dt}v_N+Av_N +\sum_{m+j=N+1} B(u_m,u_j)=h_N,
\eeq
where
\beq\label{hdef}
h_N(t)=-B(v_N,u)-B(\bar u_N,v_N)-\sum_{\stackrel{1\le m,j\le N}{m+j\ge  N+2}} B(u_m,u_j).
\eeq 

\medskip
\noindent\textit{II.2.~Bounds for $h_N$.}
We claim that
\beq\label{hNo}
h_N(t)=\mathcal O_{\al,\s}(e^{-(N+1+\varep_*)t}).
\eeq
Hence, there exist $T_\al>0$ and $H_{N,\al}>0$ such that 
\beq\label{hNp}
|h_N(t)|_{\al,\s}\le H_{N,\al} e^{-(N+1+\varep_*)t},\quad\forall t\ge T_\al.
\eeq

First, observe that by the induction hypothesis, \eqref{ind:hyp1} is satisfied with $\be=\al+1/2$, so that
\beq\label{vN2}
 v_N(t)=\mathcal O_{\al+1/2,\s}(e^{-(N+\varep_*)t}).
\eeq
Then, by inequality \eqref{AalphaB} and the facts \eqref{Aas}, \eqref{vN2}, we may assert that
\beq\label{b1}
B(v_N(t),u(t))=\mathcal O_{\al,\s}(e^{-(N+1+\varep_*)t}).
\eeq

On the other hand, for $1\le n\le N$, by \eqref{als} we have 
\beq \label{oun}
u_n(t)=q_n(t)e^{-nt}=\mathcal O_{\al+1/2,\s}(e^{-(n-\delta_*)t})\quad \text{with } \delta_*=1/4.
\eeq
In particular, $u_n(t)=\mathcal O_{\al+1/2,\s}(e^{-t})$ for $2\le n\le N$.
Since, $u_1(t)$ is already $\mathcal O_{\al+1/2,\s}(e^{-t})$, then 
\beq\label{uNo} 
\bar u_N(t)=\mathcal O_{\al+1/2,\s}(e^{-t}).
\eeq 
Applying \eqref{AalphaB} again and using \eqref{vN2}, \eqref{uNo} yield  
\beq\label{b2} 
B(\bar u_N(t),v_N(t))=\mathcal O_{\al,\s}(e^{-(N+1+\varep_*)t}).
\eeq

Lastly, observe that by \eqref{oun} and inequality \eqref{AalphaB} we have
$B(u_m,u_j)=\mathcal O_{\al,\s}(e^{-(m+j-2\delta_*)t})$.
Thus, for $m+j\ge N+2$ we have
\beq\label{b3}
B(u_m,u_j)=\mathcal O_{\al,\s}(e^{-(N+2-1/2)t})=\mathcal O_{\al,\s}(e^{-(N+1+\varep_*)t}).
\eeq

Combining definition \eqref{hdef} of $h_N$ with \eqref{b1}, \eqref{b2} and \eqref{b3}
gives the desired \eqref{hNo}.

\medskip
\noindent\textit{II.3.~Construction of $q_{N+1}$.}
Observe that
\beq
	v_N=\sum_{k\leq N}R_kv_N+R_{N+1}v_N+\sum_{k\geq N+2}R_{k}v_N.\notag
\eeq
By \eqref{vNeq}, we have
\beqs%\label{Rvn}
\frac d{dt}R_kv_N+kR_kv_N +\sum_{m+j=N+1}R_kB(u_m,u_j)=R_kh_N,
\eeqs
or equivalently, by definition of $u_j$ we have
\beq\label{Rvn:alt}
\frac d{dt}R_kv_N+kR_kv_N +\sum_{m+j=N+1}e^{-(N+1)t}R_kB(q_m, q_j)=R_kh_N.
\eeq
We will extract polynomials associated with the correct exponential decay from each regime: $k\leq N$, $k=N+1$, and $k\geq N+2$.  In each case, we show that their error from $v_N$ are of the desired order, i.e., \eqref{err}.

{\flushleft \textbf{Case $k=N+1$.} }  By \eqref{Rvn:alt}, we have
\beqs
\frac d{dt}R_{N+1}v_N+(N+1)R_{N+1}v_N +\sum_{m+j=N+1} e^{-(N+1)t}R_{N+1}B(q_m,q_j) =R_{N+1}h_N(t).
\eeqs
With the integrating factor $e^{(N+1)t}$, we obtain
\begin{align}\label{er0}
	\begin{split}
e^{(N+1)t}&R_{N+1}v_N(t)\\
		&=R_{N+1}v_N(0)-\sum_{m+j=N+1}\int_0^tR_{N+1}B(q_m(\tau), q_j(\tau))\ d\tau+\int_0^te^{(N+1)\tau}h_N(\tau)\ d\tau.
	\end{split}
\end{align}

By \eqref{hNo},
\beq\label{erh}
e^{(N+1)t}R_{N+1} h_N(t)=\mathcal O_{\al,\s}(e^{-\varep_* t}).
\eeq
This and the fact $R_{N+1}h_N(t)$ is continuous on $[0,\infty)$  imply %the limit in $R_{N+1}H$
\beq\label{xi} 
\xi_{N+1}\eqdef R_{N+1}v_N(0)+\int_0^\infty e^{(N+1)\tau} R_{N+1}h_N(\tau)\ d\tau \quad \text{ exists and belongs to }  R_{N+1}H.
\eeq
Thus, let us define the polynomial
\beq \label{p1}
p_{N+1,N+1}(t)\eqdef\xi_{N+1}-\sum_{m+j=N+1}\int_0^tR_{N+1}B(q_m(\tau),q_j(\tau))\ d\tau.
\eeq

Then by \eqref{erh} we have
\begin{align*}
e^{(N+1)t}R_{N+1}v_N(t)-p_{N+1,N+1}(t)
&=-\int_t^\infty e^{(N+1)\tau} R_{N+1}h_N(\tau)d\tau=\mathcal O_{\al,\s}(e^{-\varep_* t}),
\end{align*}
and hence, that
\beq\label{k1}
R_{N+1}v_N(t)-e^{-(N+1)t}p_{N+1,N+1}(t)=\mathcal O_{\al,\s}(e^{-(N+1+\varep_*) t}).
\eeq

{\flushleft\textbf{Case $k\le N$.}}  Firstly, with the integrating factor $e^{kt}$ and the fact, by \eqref{vN2}, 
$$\lim_{t\to\infty}e^{kt}R_kv_N(t)=0,$$
we obtain from \eqref{Rvn:alt} that
\beq\label{form1}
e^{kt}R_kv_N(t)
=\int_t^\infty e^{(k-N-1)\tau}\Big(\sum_{m+j=N+1} R_kB(q_m,q_j) \Big)d\tau-\int_t^\infty e^{k\tau}R_k h_N(\tau))d\tau. 
\eeq

Observe that by property \eqref{hNp}, we estimate for large $t$ that
\begin{align}
\Big|\int_t^\infty e^{k\tau}R_k h_N(\tau)d\tau\Big|_{\al,\s}
&\le H_{N,\al}\int_t^\infty e^{-(N+1+\varep_*-k)\tau}d\tau=\frac{H_{N,\al}e^{-(N+1+\varep_*-k)t}}{N+1+\varep_*-k} \notag \\
&\le 2H_{N,\al}e^{-(N+1+\varep_*-k)t}. \label{hleN}
\end{align}

An elementary calculation shows that for any $\beta>0$ and integer $d\ge 0$ one has
\beqs
\int_t^\infty  \tau^d e^{-\beta \tau}\ d\tau=e^{-\be t}\sum_{n=0}^{d}\frac{d!}{n!\beta^{d+1-n}}t^n.
\eeqs
Thus
\beq\label{i1}
	p_{N+1,k}(t)\eqdef e^{(N+1-k)t}\sum_{m+j=N+1}\int_t^\infty e^{-(N+1-k)\tau}R_kB(q_m(\tau), q_j(\tau))\ d\tau
\eeq
defines a polynomial in $t$, valued in $R_kH$. 

Returning then to \eqref{form1}, we apply \eqref{hleN} and \eqref{i1} to derive for large $t$ that 
\begin{align*}
|e^{kt}R_kv_N(t)-e^{(k-N-1)t}p_{N+1,k}(t)|_{\al,\s}\le 2H_{N,\al} e^{-(N+1+\varep_*-k)t}.
\end{align*}
Multiplying this inequality by $e^{-kt}$ gives
\beqs%\label{k2}
R_kv_N(t)-e^{-(N+1)t}p_{N+1,k}(t) =\mathcal O_{\al,\s}(e^{-(N+1+\varep_*)t}).
\eeqs
Summing this identity over $1\leq k\leq N$ we obtain
\beq\label{k2}
\sum_{k=1}^NR_kv_N(t)-\sum_{k=1}^N e^{-(N+1)t}p_{N+1,k}(t) =\mathcal O_{\al,\s}(e^{-(N+1+\varep_*)t}).
\eeq

{\flushleft\textbf{Case $k\ge N+2$.} }
Let $T_\al$ be as in \eqref{hNp}.
For $t>T_\al$, we have from \eqref{Rvn:alt} that
\begin{align}\label{bq}
	\begin{split}
R_k& v_N(t)=e^{-k(t-T_\al)} R_k v_N(T_\al)\\
	&-\sum_{m+j=N+1}e^{-kt}\int_{T_\al}^t e^{(k-(N+1))\tau } R_kB(q_m(\tau),q_j(\tau))\ d\tau
+\int_{T_\al}^t e^{-k(t-\tau)}R_k h_N(\tau) d\tau.
	\end{split}
\end{align}

Consider the first integral on the right-hand side of \eqref{bq}.
An elementary calculation shows that for any integer $d\ge 0$ and  $\beta\in\R$ nonzero we have 
\beqs
\int  t^d e^{\beta t}\ dt=e^{\beta t}\sum_{n=0}^{d}\frac{(-1)^{d-n} d!}{n!\beta^{d+1-n}}t^n +const.
	\eeqs
This identity and the fact that each $B(q_m(\tau),q_j(\tau))$ is a polynomial of $\tau$ imply that
there is a polynomial, $p_{N+1,k}(t)$, in $t$, valued in $R_kH$ such that
\begin{multline}\label{i2}
-\int_{T_\al}^t e^{(k-(N+1))\tau }\Big(\sum_{m+j=N+1} R_kB(q_m,q_j) \Big) d\tau \\
= e^{(k-(N+1))t}p_{N+1,k}(t)-e^{(k-(N+1))T_\al}p_{N+1,k}(T_\al).
\end{multline}

Then  \eqref{bq}  gives
\begin{multline}\label{klarge}
R_k v_N(t)=e^{-k(t-T_\al)} R_k v_N(T_\al)\\
 + \Big(e^{-(N+1)t}p_{N+1,k}(t)-e^{-k(t-T_\al) -(N+1)T_\al}p_{N+1,k}(T_\al)\Big)
+\int_{T_\al}^t e^{-k(t-\tau)}R_k h_N(\tau) d\tau.
\end{multline}

It follows from \eqref{klarge} and \eqref{hNp} that
\begin{align*}
&|R_k v_N(t)-e^{-(N+1)t}p_{N+1,k}(t)|_{\al,\s}\\
&\le e^{-k(t-T_\al)}\Big( |R_k v_N(T_\al)|_{\al,\s} + e^{-(N+1)T_\al}|p_{N+1,k}(T_\al)|_{\al,\s}\Big)
+\int_{T_\al}^t e^{-k(t-\tau)}|h_N(\tau)|_{\al,\s}  d\tau\\
&\le e^{-(N+2)(t-T_\al)}\Big( |R_k v_N(T_\al)|_{\al,\s} + |p_{N+1,k}(T_\al)|_{\al,\s}\Big)
+\int_{T_\al}^t e^{-k(t-\tau)}H_{N,\al} e^{-(N+1+\varep_*)\tau}  d\tau.
\end{align*}
Elementary calculations give
\begin{align*}
|R_k v_N(t)-e^{-(N+1)t}p_{N+1,k}(t)|_{\al,\s}
&\le e^{-(N+2)(t-T_\al)}( |R_k v_N(T_\al)|_{\al,\s}+ |p_{N+1,k}(T_\al)|_{\al,\s})\\
&\quad +\frac{H_{N,\al} e^{-(N+1+\varep_*)t}}{k-(N+1+\varep_*)} .
\end{align*}

Squaring the preceding inequality, using the 3-term Cauchy-Schwarz inequality for the right-hand side, and then summing up in $k$ yield
\begin{align}
&\sum_{k\ge N+2} |R_k v_N(t)-e^{-(N+1)t}p_{N+1,k}(t)|_{\al,\s}^2 \notag 
\le 3e^{-2(N+2)(t-T_\al)}\\
&\quad \cdot \Big( \sum_{k\ge N+2}|R_k v_N(T_\al)|_{\al,\s}^2 + \sum_{ N+2\le k\le s_{N+1}}|p_{N+1,k}(T_\al)|_{\al,\s}^2\Big) +\sum_{k\ge N+2} \frac{3H_{N,\al}^2 e^{-2(N+1+\varep_*)t}}{(k-(N+1+\varep_*))^2} \notag \\
&\le e^{-2(N+2)t}E_1^2 +  e^{-2(N+1+\varep_*)t} E_2^2, \label{QN1}
\end{align}
where
\beqs
E_1^2= 3e^{2(N+2)T_\al}\big ( |v_N(T_\al)|_{\al,\s}^2+ \sum_{N+2\le k\le s_{N+1}}|p_{N+1,k}(T_\al)|_{\al,\s}^2\big ),
\eeqs
\beqs
E_2^2=\sum_{k\ge N+2} \frac{3H_{N,\al}^2}{(k-(N+1+\varep_*))^2}=3H_{N,\al}^2\sum_{k\ge 1} \frac{1}{(k-1/2)^2}.
\eeqs

{\flushleft \textbf{Definition of $q_{N+1}$.}}  Finally, we define the polynomial
\beq\label{qdef}
q_{N+1}(t)\eqdef\sum_{1\le k\le s_{N+1}}p_{N+1,k}(t),\text{ that is, } R_kq_{N+1}(t)=p_{N+1,k}(t)
\text{ for }k=1,\dots, s_{N+1},
\eeq
where $p_{N+1,k}(t)$ are polynomials defined in \eqref{p1}, \eqref{i1} and \eqref{i2}.  

Therefore, with  $q_{N+1}(t)$ defined as in \eqref{qdef}, properties \eqref{k1} and \eqref{k2} can be summarized as
\beq\label{k5}
|P_{N+1}(v_N(t)-e^{-(N+1)t}q_{N+1}(t))|_{\al,\s}
= \mathcal O(e^{-(N+1+\varep_*)t}),
\eeq
while \eqref{QN1} can be equivalently expressed as
\beq\label{k4}
|(I-P_{N+1})(v_N(t)-e^{-(N+1)t}q_{N+1}(t))|_{\al,\s}^2
= \mathcal O(e^{-2(N+1+\varep_*)t}).
\eeq

Let $u_{N+1}(t)\eqdef e^{-(N+1)t}q_{N+1}(t)\in P_{s_{N+1}}H$ and $v_{N+1}\eqdef v_N-u_{N+1}$. Then \eqref{k4} and \eqref{k5} imply
\beq\label{k3}
|v_{N+1}(t)|_{\al,\s}=\mathcal O(e^{-(N+1+\varep_*)t}).
\eeq
Since $v_{N+1}=u-\sum_{n=1}^{N+1}u_n$, inequality \eqref{k3}  proves \eqref{err} for $N+1$.

\medskip
\noindent\textit{II.4.~Evolution of $u_{N+1}$.} 
To complete the induction step, it remains to show that $u_{N+1}$ satisfies the ODE \eqref{um:eqn}.  Observe that we need only to show that the equation holds in the finite dimensional space $P_{s_{N+1}}H$, or equivalently, in $R_kH$ for $1\le k\le s_{N+1}$.

For $1\le k\le N$ and $N+2\leq k\leq s_{N+1}$ we have from \eqref{i1} and \eqref{i2}, respectively, that
	\begin{align*}
&\frac{d}{dt}R_ku_{N+1}(t) + AR_ku_{N+1}(t)
=\frac{d}{dt}R_ku_{N+1}(t) + kR_ku_{N+1}(t) \\
&=e^{-kt} \frac{d}{dt} \big(e^{kt}R_ku_{N+1}(t)\big)=e^{-kt} \frac{d}{dt}\big(e^{(k-N-1)t}p_{N+1,k}(t)\big)\\
&=e^{-kt}\Big[ -e^{(k-N-1)t}\sum_{m+j=N+1} R_kB(q_m(t),q_j(t))\Big] =-\sum_{m+j=N+1} R_kB(u_m(t),u_j(t)) .
	\end{align*}

For $k=N+1$, we have from \eqref{p1} that
	\begin{align*}
		\frac{d}{dt}R_{N+1}u_{N+1}(t)&=\frac{d}{dt}\left(e^{-(N+1)t}p_{N+1,N+1}(t)\right)\notag\\
					&=-(N+1)e^{-(N+1)t}p_{N+1,N+1}(t)-e^{-(N+1)t}\sum_{m+j=N+1}R_{N+1}B(q_m(t),q_j(t)\\
					&=-AR_{N+1}u_{N+1}(t)-\sum_{m+j=N+1}R_{N+1}B(u_m(t),u_j(t)).
	\end{align*}
Hence
	\begin{align*}
		\frac{d}{dt}R_ku_{N+1}+AR_ku_{N+1}+\sum_{m+j=N+1}R_kB(u_m,u_j)=0
	\end{align*}
holds for each $1\leq k\leq s_{N+1}$ and therefore, that \eqref{um:eqn} holds for $u_{N+1}$ in $P_{s_{N+1}}H$.

Since \eqref{err} and \eqref{um:eqn} hold for $n=N+1$, this completes the induction step.  Thus, ($\mathcal T_N$) is true for all $N\in\N$.
In concluding this proof, let us remark that in the induction step, the polynomials, $q_n(t)$, $n=1,\dots, N$ appearing in  ($\mathcal T_{N+1}$) are precisely those from ($\mathcal T_N$). Hence, the polynomials $q_n(t)$'s exist for all $n\in \N$.
\end{proof}

We are ready to prove the main result.

\begin{proof}[Proof of Theorem \ref{mainthm}]
Let $u(t)$ be a Leray-Hopf weak solution. For any $\s>0$, Theorem \ref{decay2} and Lemma \ref{STsigma} imply that there is $T_\s>0$ such that $u(T_\s)\in\solnset$, solution $u(t)$ is regular on $[T_\s,\infty)$ and there are polynomials $Q_n^\s(t)$ for all $n\in\N$ such that $u^\s(t)\eqdef u(T_\s+t)$ satisfies for each $N\ge 1$ and all $\al>0$ that
the expansion
\beq\label{x1}
u^\s(t)\sim \sum_{n=1}^\infty Q^\s_n(t)e^{-nt} \text{ as }t\to\infty  \text{ holds in } G_{\al,\s}
\eeq
with 
\beqs
\Big| u^\s(t)-\sum_{n=1}^N Q^\s_n(t)e^{-nt} \Big|_{\al,\s} =\mathcal O(e^{-(N+1/2)t})\text{ as } t\to\infty.
\eeqs

By defining 
\beq\label{qsig}
q_n^\s(t)=Q_n^\s(t-T_\s) e^{nT_\s},
\eeq
we have for any $N\ge 1$ and $\al>0$ that
\beq\label{remain1}
\begin{aligned}
\Big| u(t)-\sum_{n=1}^N q^\s_n(t)e^{-nt} \Big|_{\al,\s} 
&=\Big| u^\s(t-T_\s)-\sum_{n=1}^N Q^\s_n(t-T_\s)e^{-n(t-T_\s)} \Big|_{\al,\s}\\
& =\mathcal O(e^{-(N+1/2)(t-T_\s)})
=\mathcal O(e^{-(N+1/2)t})\quad \text{as } t\to\infty.
\end{aligned}
\eeq

It remains to prove that $q_n^\s(t)$ is independent of $\s$.
Suppose $\s'\ne \s$ and $T_{\s'}\ge T_\s$.  Then applying \eqref{x1} to $\s'$ in place of $\s$ gives
\beq\label{x2}
u(T_{\s'}+t)\sim \sum_{n=1}^\infty Q^{\s'}_n(t)e^{-nt}
\eeq
and, at the same time, from \eqref{x1}
\beq\label{x3}
\begin{aligned}
u(T_{\s'}+t)=u^\s(t+T_{\s'}-T_\s)
&\sim \sum_{n=1}^\infty Q^\s_n(t+T_{\s'}-T_\s)e^{-n(t+T_{\s'}-T_\s)}\\
&=\sum_{n=1}^\infty Q^\s_n(t+T_{\s'}-T_\s)e^{-n(T_{\s'}-T_\s)}e^{-nt}.
\end{aligned}
\eeq

Since both \eqref{x2} and \eqref{x3} can be seen as asymptotic expansions in $H$ for the regular solution $u(T_{\s'}+t)$ with $t\ge 0$, by the expansion's uniqueness, we have
\beq\label{QQ}
Q^{\s'}_n(t)=Q^\s_n(t+T_{\s'}-T_\s)e^{-n(T_{\s'}-T_\s)}.
\eeq
Then it follows definition \eqref{qsig} and relation \eqref{QQ} that
\beqs
q_n^{\s'}(t)=Q_n^{\s'}(t-T_{\s'}) e^{nT_{\s'}}
=Q^\s_n(t-T_\s)e^{-n(T_{\s'}-T_\s)}  e^{nT_{\s'}}
=Q^\s_n(t-T_\s)e^{nT_\s} =q_n^\s(t).
\eeqs

For $n\ge 1$, let $q_n(t)=q_n^1(t)$ which is defined by \eqref{qsig} with $\s=1$.
Then $q_n^\s=q_n$ for all $n\ge 1$ and $\s>0$.
Therefore \eqref{remain1} holds for all $N\ge 1$ and  $\al,\s>0$.
Note for all $N\ge 1$ and  $\al,\s>0$ that, with the same notation used in Lemma \ref{STsigma}, as $t\to\infty$ 
$$v_N(t)=q_{N+1}(t)e^{-(N+1)t}+v_{N+1}(t)
=\mathcal O_{\al,\s}(e^{-(N+\varep)t})+\mathcal O_{\al,\s}(e^{-(N+3/2)t})=\mathcal O_{\al,\s}(e^{-(N+\varep)t}),$$
for any $\varep\in(0,1)$, which yields  \eqref{remain}.
The proof of Theorem \ref{mainthm} is complete.
\end{proof}

\begin{remark}
By combining this paper's method with those in \cite{FLOZ1,FLOZ2,FLS1}, we can study the associated normal form to the expansion \eqref{expand}, and its solutions  in the Gevrey spaces.  This study will be pursued in a subsequent work.
\end{remark}

\appendix

\section{}\label{apex}

\begin{proof}[Proof of Lemma \ref{nonLem}] 
The proof follows Foias-Temam \cite{FT-Gevrey} and the Sobolev-norm version in \cite[Lemma 2.3]{FLS1}.
Let $u,v,w$ be $H$ with 
\beqs u=\sum_{\veck\ne 0} \hat \vecu(\veck)e^{-i\veck\cdot \vecx},\ 
v=\sum_{\veck\ne 0} \hat \vecv(\veck)e^{-i\veck\cdot \vecx},\ w=\sum_{\veck\ne 
0} \hat \vecw(\veck)e^{-i\veck\cdot \vecx}.\eeqs 
Define the scalar functions
\beq \label{ustar}
u_*=\sum_{\veck\ne 0} |\hat \vecu(\veck)|e^{-i\veck\cdot \vecx},\ 
v_*=\sum_{\veck\ne 0} |\hat \vecv(k)|e^{-i\veck\cdot \vecx},\ w_*=\sum_{\veck\ne 
0} |\hat \vecw(\veck)|e^{-i\veck\cdot \vecx}.\eeq 
Then 
%The relation between the Sobolev norms of $u$ and $u_*$ is:
\beq\label{starrelation} |A^\alpha u|=|(-\Delta)^{\alpha} u_*| \ \hbox{for all}\ 
\alpha\ge 0.\eeq
We have
\begin{align*}
\inprod{A^\alpha e^{\sigma A^{1/2}}B(u,v),w}
&=8\pi^3\sum_{\veck+\vecl+\vecm=0} |\vecm|^{2\alpha}e^{\sigma |\vecm|} (\hat 
\vecu(\veck)\cdot \vecl)\, ( \hat \vecv(\vecl) \cdot \hat \vecw(\vecm) ).
\end{align*}
Since  
$$|\vecm|^{2\alpha}=|\veck+\vecl|^{2\alpha} \le 2^{2\alpha}(|\veck|^{2\alpha}+|\vecl|^{2\alpha})
\quad
\text{and}\quad
e^{\sigma |\vecm|}\le e^{\sigma |\veck|} e^{\sigma |\vecl|},
$$
it follows that
\begin{align*}
\lvert\inprod{A^\alpha e^{\sigma A^{1/2}}B(u,v),w}\rvert
&\le 8\pi^3 4^\alpha \sum_{\veck+\vecl+\vecm=0} |\veck|^{2\alpha}e^{\sigma 
|\veck|}  |\hat \vecu(\veck)|\cdot e^{\sigma |\vecl|}  |\vecl|\cdot | \hat \vecv(\vecl) 
| |\hat \vecw(\vecm) |\\
&\quad + 8\pi^3 4^\alpha \sum_{\veck+\vecl+\vecm=0} e^{\sigma |\veck|}  |\hat 
\vecu(\veck)|\cdot |\vecl|^{2\alpha+1}e^{\sigma |\vecl|} \cdot | \hat \vecv(\vecl) | 
|\hat \vecw(\vecm) |.
\end{align*}
Rewriting the last inequality's right-hand side in terms of $u_*$, $v_*$ and $w_*$ gives
\begin{multline}\label{starint} 
\lvert\inprod{A^\alpha e^{\sigma A^{1/2}}B(u,v),w}\rvert
\le 8\pi^34^\al\left|\int_{\Omega} ((-\Delta)^{\alpha}e^{\sigma 
A^{1/2}}u_*) \cdot ((-\Delta)^{1/2}e^{\sigma A^{1/2}}v_*) \cdot w_*\ dx\right|\\
+8\pi^34^\al\left|\int_{\Omega} (e^{\sigma 
A^{1/2}}u_*) \cdot ((-\Delta)^{\al+1/2}e^{\sigma A^{1/2}}v_*) \cdot w_*\ dx\right|
\eqdef 8\pi^34^\al I_1+8\pi^34^\al I_2.
\end{multline}

We recall the Sobolev, interpolation, and Agmon inequalities for functions $u_*$, $v_*$, $w_*$ of the form in \eqref{ustar}. There are   positive constants $c_1$ and $c_2$ such that  
\beqs
\|u_*\|_{L^6(\Omega)}\le c_1 |(-\Delta)^{1/2}u_*|,
\eeqs
\beqs
\|u_*\|_{L^3(\Omega)}\le c_1^{1/2} |(-\Delta)^{1/4}u_*|,
\eeqs
\beqs
\|u_*\|_{L^\infty(\Omega)}\le c_2|(-\Delta)^{1/2}u_*|^{1/2} |(-\Delta)u_*|^{1/2}.
\eeqs

For $I_1$ in \eqref{starint}, we apply H\"older's inequality with powers $3$, $6$, and $2$.  Then by the above interpolation and  Sobolev inequalities, and relation \eqref{starrelation}, we obtain 
\begin{align}
I_1
&\le  \|(-\Delta)^{\alpha}e^{\sigma 
A^{1/2}}u_*\|_{L^3(\Omega)} \|(-\Delta)^{1/2}e^{\sigma A^{1/2}}v_*\|_{L^6(\Omega)} | w_*| \notag\\
&\le c_1^{3/2}|(-\Delta)^{\alpha+1/4}e^{\sigma A^{1/2}}u_*| |(-\Delta)e^{\sigma 
A^{1/2}}v_*| |w_*| \notag\\
&\le c_1^{3/2}|A^{\alpha+1/4}e^{\sigma A^{1/2}}u| |Ae^{\sigma A^{1/2}}v||w|.\label{t1}
\end{align}

Similarly, estimating $I_1$ by H\"older's inequality with powers $6,3,2$, and then using interpolation inequalities and the relation \eqref{starrelation}, we obtain
\begin{align}
 I_1
&\le  \|(-\Delta)^{\alpha}e^{\sigma 
A^{1/2}}u_*\|_{L^6(\Omega)} \|(-\Delta)^{1/2}e^{\sigma A^{1/2}}v_*\|_{L^3(\Omega)} | w_*| \notag\\
 & \le c_1^{3/2}|(-\Delta)^{\alpha+1/2}e^{\sigma A^{1/2}}u_*| 
|(-\Delta)^{3/4}e^{\sigma A^{1/2}}v_*| |w_*| \notag\\
&\le  c_1^{3/2}|A^{\alpha+1/2}e^{\sigma A^{1/2}}u| |A^{3/4}e^{\sigma A^{1/2}}v| 
|w|. \label{t2}
\end{align}

For $I_2$ in  \eqref{starint}, applying the H\"older 
inequality  and then using the Agmon inequality for the embedding of $\mD(A)$ 
into $L^\infty(\Omega)^3$,  we obtain
\begin{align}
 I_2&
 %\Big|\int_\Omega (e^{\sigma A^{1/2}}u_*)\cdot ((-\Delta)^{\alpha+1/2} e^{\sigma A^{1/2}}v_*)\cdot w_*d\vecx\Big|
\le \|e^{\sigma A^{1/2}}u_*\|_{L^\infty(\Omega)} 
|(-\Delta)^{\alpha+1/2}e^{\sigma A^{1/2}}v_*||w_*| \notag \\
&\le c_2 |(-\Delta)^{1/2}e^{\sigma A^{1/2}}u_*|^{1/2}|(-\Delta)e^{\sigma 
A^{1/2}}u_*|^{1/2} \cdot |(-\Delta)^{\alpha+1/2}e^{\sigma A^{1/2}}v_*||w_*| \notag \\
&\le c_2|A^{1/2}e^{\sigma A^{1/2}}u|^{1/2}|Ae^{\sigma 
A^{1/2}}u|^{1/2}|A^{\alpha+1/2}e^{\sigma A^{1/2}}v||w|. \label{t3}
\end{align}

Combining \eqref{starint} with \eqref{t3} and \eqref{t1}, resp.~\eqref{t2}, 
yields \eqref{B1}, resp.~\eqref{B2} with
\beqs\label{C-const} 
c_*=8\pi^3\max\{c_1^{3/2},c_2\}.\eeqs

For $\alpha\ge 1/2$, it follows either \eqref{B1} or \eqref{B2} that
\beqs |A^\alpha e^{\sigma A^{1/2}} B(u,v)|\le 2c_* 4^\alpha  |A^{\alpha+1/2}e^{\sigma A^{1/2}}u| \, 
|A^{\alpha+1/2}e^{\sigma A^{1/2}}v|,  
\eeqs
and, hence, we obtain \eqref{AalphaB}.  
\end{proof}

\medskip
\noindent\textbf{Acknowledgement.} The authors would like to thank Ciprian Foias and Edriss S. Titi for insightful discussions.
L.H. acknowledges the support by NSF grant DMS--1412796.

%%%%%%%%%%%%%%%%%%%%%%%%%%%%%%%%%%
\def\cprime{$'$}

\end{document}